\theoremstyle{plain}
\newtheorem{theorem}{Theorem}[section]
\newtheorem{theorem*}{Theorem}
\newtheorem{proposition}[theorem]{Proposition}
\newtheorem{lemma}[theorem]{Lemma}
\newtheorem{corollary}[theorem]{Corollary}
\newtheorem{conjecture}[theorem]{Conjecture}
\theoremstyle{definition}
\newtheorem{question}{Question}
\newtheorem{remark}{Remark}
\theoremstyle{remark}
\newcommand{\B}{\mathcal{B}}
\newcommand{\C}{\mathcal{C}}
\newcommand{\D}{\mathcal{D}}
\newcommand{\G}{\mathcal{G}}
\renewcommand{\O}{\mathcal{O}}
\renewcommand{\S}{\mathcal{S}}
\newcommand{\T}{\mathcal{T}}
\newcommand{\KK}{\mathbb{K}}
\newcommand{\lcm}{\mathrm{lcm}}
\newcommand{\gen}[1]{\langle#1\rangle}
\newcommand{\set}[1]{\left\lbrace#1\right\rbrace}
\newcommand{\ind}{\mathrm{index}}
\newcommand{\supp}{\mathrm{supp}}
\newcommand{\bx}{\textbf{x}}
\tikzset{->-/.style={decoration={markings, mark=at position 0.5 with {\arrow{stealth}}},postaction={decorate}}}
\tikzset{-<-/.style={decoration={markings, mark=at position 0.5 with {\arrow{stealth reversed}}},postaction={decorate}}}
\begin{document}
\title[Index of $3$-uniform clutters]{Green-Lazarsfeld index of square-free monomial ideals and their powers}

\author[M. Farrokhi D. G.]{Mohammad Farrokhi D. G.}
\email{m.farrokhi.d.g@gmail.com,\ farrokhi@iasbs.ac.ir}
\address{ Research Center for Basic Sciences and Modern Technologies (RBST), Institute for Advanced Studies in Basic Sciences (IASBS), 
Zanjan 45137-66731, Iran}

\author[Y. Sadegh]{Yasin Sadegh}
\email{yassin.sadegh@yahoo.com, y.sadegh@uma.ac.ir}
\address{Department of Mathematics,  Faculty of Mathematical
Sciences, University of Mohaghegh Ardabili, 56199-11367,  Ardabil, Iran.}  

\author[A. A. Yazdan Pour]{{Ali Akbar} {Yazdan Pour}}
\email{yazdan@iasbs.ac.ir}
\address{Department of Mathematics, Institute for Advanced Studies in Basic Sciences (IASBS), Zanjan 45137-66731, Iran}

\subjclass[2010]{Primary: 13D02, 05E40; Secondary: 13P20}
\keywords{Green-Lazarsfeld index, linear syzygy, powers of ideal, clutter}
\begin{abstract}
Let $\mathbb{K}$ be a field and $I$ be a square-free monomial ideal in the polynomial ring $\mathbb{K}[x_1, \ldots, x_n]$. The Green-Lazarsfeld index, $\mathrm{index}(I)$, counts the number of steps to reach to a syzygy minimally generated by a nonlinear form in a graded minimal free resolution of $I$. In this paper, we study this invariant for $I$ and its powers from a combinatorial point of view. We characterize all square-free monomial ideals $I$ generated in degree $3$ such that $\mathrm{index}(I)>1$. Utilizing this result, we also characterize all square-free monomial ideals generated in degree $3$ such that $\mathrm{index}(I)>1$ and $\mathrm{index}(I^2)=1$. In case $n\leq5$, it is shown that $\mathrm{index}(I^k)>1$ for all $k$ if $I$ is any square-free monomial ideal with $\mathrm{index}(I)>1$.
\end{abstract}
\maketitle
\section*{Introduction}
Let $\KK$ be a field and $S=\mathbb{K}[x_1,\ldots,x_n]$ be the polynomial ring in $n$ variables endowed with the standard grading (i.e. $\deg(x_i)=1$ for all $1\leq i\leq n$). Let $I \neq 0$ be a homogeneous ideal in $S$ generated by elements of degree $d$. A classical problem in algebraic geometry and commutative algebra is to study equations of syzygies of $I$ and determine when these equations are linear forms. The ideal $I$ is said to satisfy the \textit{${\rm N}_{d,p}$-property} ($p>0$), if
\[ \beta_{i, i+j} (I) =0, \quad \text{for all } i < p, \text{ and } j>d.\]
The quantity
\[\mathrm{index}(I) = \sup \left\{p \colon \quad I \text{ satisfies $\mathrm{N}_{d,p}$-property} \right\}\]
is called the \textit{Green-Lazarsfeld index}, or simply \textit{index} of $I$. The ${\rm N}_{d,p}$ notation defined in \cite{EGHP} comes essentially from the notation $N_p$ of Green and Lazarsfeld in \cite{mlg-rl-84, mlg-rl-85} (see also \cite{EL93}). This notation indicates when the minimal equations defining the syzygies are in the simplest form (that is of the linear form). The Green-Lazarsfeld index of ideals is very difficult to compute in general. Important conjectures, such as Green's conjecture \cite[Chapter 9]{Eisenbud 2005}, predicts the value of this invariant for certain families of varieties. In \cite{wb-ac-tr} the authors study the Green-Lazarsfeld index of the Veronese embeddings $\nu_c\colon \mathbb{P}^{n-1} \to \mathbb{P}^N$ of degree $c$ of projective spaces and, more generally, of the Veronese embeddings of arbitrary varieties (see also \cite{mg, tj-pp-jw, go-rp, er, jw} in this matter). In their interesting paper \cite{EGHP}, Eisenbud, Green, Hulek, and Popescu provide some nice examples and conjectures about $\mathrm{N}_{2,p}$-property. As a remarkable result, they characterize the property $\mathrm{N}_{2,p}$ for monomial ideals in degree $2$ \cite[Theorem 2.1, Proposition 2.3]{EGHP}. It turns out that the edge ideal $I(G)$ of a graph $G$ satisfies $N_{2,p}$-property if and only if every induced cycle in $\bar{G}$ has length $\geq p+3$. In particular, 
\[\mathrm{index}(I(G)) = \inf \left\{ |C|-3 \colon \quad C \text{ is an induced cycle in } \bar{G} \text{ of length}>3 \right\}.\]
Though we have such a good combinatorial formulation for the index of square-free monomials generated in degree $2$, the precise value of index of powers of such ideals is still mysterious. In this regard, we have the following result due to Bigdeli, Herzog, and Zaare-Nahandi \cite[Theorem 2.1]{mb-jh-rzn}.
\begin{theorem*} \label{main theorem of MJR}
Let $I=I(G)$ be the edge ideal of a graph $G$, where $G$ may have some loops. The following conditions are equivalent:
\begin{itemize}
\item[\rm (a)] $G$ is gap-free, i.e. no induced subgraph of $G$ consists of two disjoint edges;
\item[\rm (b)] $\mathrm{index}(I^k)>1$, for all $k$;
\item[\rm (c)] $\mathrm{index}(I^k)>1$, for some $k$.
\end{itemize} 
\end{theorem*}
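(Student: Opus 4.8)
The plan is to take $(b)\Rightarrow(c)$ for free (specialize to any $k$, e.g. $k=1$) and to prove the two nontrivial implications $(c)\Rightarrow(a)$ and $(a)\Rightarrow(b)$ through the multigraded first Betti numbers of $I^k$. The starting observation is that $\mathrm{index}(J)>1$ for a monomial ideal $J$ generated in a single degree $d$ means exactly that $J$ has linear first syzygies, i.e. $\beta_{1,\ell}(J)=0$ for all $\ell\ge d+2$; refining this by multidegree and invoking the standard lcm-lattice description of $\beta_{1,\sigma}$ (Gasharov--Peeva--Welker), one has $\beta_{1,\sigma}(J)=0$ if and only if the ``lcm graph'' $\Gamma_\sigma(J)$ is connected, where $\Gamma_\sigma(J)$ has as vertices the minimal monomial generators of $J$ dividing $\sigma$ and joins two of them, $m$ and $m'$, whenever $\lcm(m,m')\ne\sigma$. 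Thus, with $d=2k$, both directions become the question of whether $\Gamma_\sigma(I^k)$ is connected for every monomial $\sigma$ with $\deg\sigma\ge 2k+2$.

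For $(c)\Rightarrow(a)$ I would prove the contrapositive in the sharp form: if $G$ is not gap-free, then $\mathrm{index}(I^k)=1$ for \emph{every} $k$. Fix an induced $2K_2$ on vertices $a,b,c,d$ (with edges $ab$ and $cd$) and set $\sigma=(x_ax_b)^k x_c x_d$, of degree $2k+2$. Any minimal generator of $I^k$ dividing $\sigma$ is a product of $k$ edges of $G$, each supported inside $\{a,b,c,d\}$; since the $2K_2$ is \emph{induced}, those edges can only be $ab$ or $cd$, so such a generator is $(x_ax_b)^i(x_cx_d)^{k-i}$, and divisibility by $\sigma$ forces $i\in\{k-1,k\}$. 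Hence exactly two generators divide $\sigma$, they are incomparable, and their $\lcm$ is $\sigma$; so $\Gamma_\sigma(I^k)$ is disconnected and $\beta_{1,\sigma}(I^k)\ne 0$. As $\deg\sigma=2k+2>2k+1$, this kills $\mathrm{N}_{2k,2}$, whence $\mathrm{index}(I^k)=1$ (and, incidentally, $(b)$ also fails).

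The substance of the theorem is $(a)\Rightarrow(b)$: assuming $G$ gap-free, I must show that $\Gamma_\sigma(I^k)$ is connected for all $k$ and all $\sigma$ with $\deg\sigma\ge 2k+2$. The approach is to connect two generators $m=e_1\cdots e_k$ and $m'=f_1\cdots f_k$ dividing $\sigma$ — we may assume $\lcm(m,m')=\sigma$, otherwise they are already adjacent — by a sequence of single-edge replacements (replace some $e_i$ by an edge $g$ of $G$ sharing a vertex with $e_i$). Because such a move alters only the two coordinates of $e_i$, the $\lcm$ of two consecutive generators along the path has degree at most $2k+1<\deg\sigma$, so each step is an edge of $\Gamma_\sigma(I^k)$, provided the new monomial still divides $\sigma$. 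Gap-freeness enters precisely to guarantee progress toward $m'$: when the supports of the current generator and of $m'$ are too far apart (in the extreme case disjoint, so that $e_1$ and $f_1$ are disjoint edges), a gap-free bridging edge between $e_1$ and $f_1$ supplies the intermediate generator, which then shares a vertex with $m'$ and brings us strictly closer.

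The step I expect to be the real obstacle is turning this into a rigorous induction: one must keep every intermediate monomial dividing $\sigma$ (an exponent-vector bound that can fail when a variable already sits at its $\sigma$-exponent) while simultaneously carrying a potential function that strictly decreases toward $m'$ — and this potential cannot just be a Hamming distance on edge-decompositions, since a monomial generator of $I^k$ may admit several edge-decompositions; measuring instead how far the exponent vector of the current generator is from being coordinatewise $\le$ that of $m'$ looks more robust, and it may also help to first reduce to $\deg\sigma=2k+2$. As a reassuring special case, for $k=1$ the equivalence $(a)\Leftrightarrow(b)$ is exactly the Eisenbud--Green--Hulek--Popescu characterization quoted above: $G$ gap-free $\Leftrightarrow$ $\bar G$ has no induced $C_4$ $\Leftrightarrow$ $\mathrm{index}(I(G))>1$.
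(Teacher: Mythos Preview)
The paper does not contain a proof of this statement: it is quoted in the introduction as a known result of Bigdeli, Herzog, and Zaare-Nahandi \cite[Theorem~2.1]{mb-jh-rzn} and is used only as background and motivation. So there is no ``paper's own proof'' to compare your proposal against.

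That said, a brief assessment of your proposal on its own terms. Your $(c)\Rightarrow(a)$ is correct and is exactly the type of lcm-lattice argument the present paper deploys in its $3$-uniform analogue (Proposition~\ref{index of some power}): with an induced $2K_2$ on $\{a,b,c,d\}$ and $\sigma=(x_ax_b)^k x_cx_d$, the open interval $(1,\sigma)$ in $L(I^k)$ consists of precisely the two incomparable atoms $(x_ax_b)^k$ and $(x_ax_b)^{k-1}x_cx_d$, so $\tilde H_0(\Delta_{I^k}(1,\sigma);\KK)\neq0$ and $\beta_{1,2k+2}(I^k)\neq0$. One small caveat: your graph $\Gamma_\sigma(J)$ (edges when $\lcm(m,m')\neq\sigma$) is not literally the object whose connectedness equals vanishing of $\beta_{1,\sigma}$; the correct object is the order complex $\Delta_J(1,\sigma)$, whose vertices include all lcm's strictly below $\sigma$, not just the minimal generators. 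In your two-atom situation the distinction is harmless, but for the forward direction $(a)\Rightarrow(b)$ it matters: connectedness of $\Gamma_\sigma$ is only \emph{sufficient} for connectedness of $\Delta_J(1,\sigma)$, which is what you actually need, so your plan is still sound there.

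Your $(a)\Rightarrow(b)$ is, as you yourself flag, only a sketch. The single-edge-replacement strategy is indeed how the original proof in \cite{mb-jh-rzn} proceeds (and is also the mechanism behind Theorem~\ref{index in terms of graph} here), but the bookkeeping you worry about---keeping each intermediate product a genuine element of $\G(I^k)$ that still divides $\lcm(u,v)$ while a suitable potential strictly drops---is exactly the content of that proof and is not supplied by what you wrote. In particular, ``a gap-free bridging edge between $e_1$ and $f_1$'' need not produce a monomial dividing $\sigma$ without further argument, and the non-uniqueness of edge-decompositions of a degree-$2k$ monomial does require the more robust exponent-vector measure you allude to. So as it stands, $(a)\Rightarrow(b)$ is a plausible outline with a genuine gap, and you would need to consult \cite{mb-jh-rzn} (or reproduce its induction) to close it.
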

The above theorem implies in particular that for a monomial ideal generated in degree $2$ we have
$\mathrm{index}(I) = 1$ if and only if $\mathrm{index}(I^k) = 1$ for all $k$, while simple examples show that this fails if $I$ is not generated in degree $2$ (see Theorem \ref{index(I)>1 and index(I2)>1}).

In this paper, we consider the square-free monomial ideals $I$ generated in degree $3$ and study the index of $I$ and its powers. Note that such ideals are in one-to-one correspondence with edge ideals of $3$-uniform clutters. Recall that a $3$-uniform clutter $\C$ on vertex set $V=\{v_1, \ldots, v_n\}$ is a collection of $3$-subsets of $V$ such that $V=\cup\C$, and the edge ideal $I(\C)$ of $\C$ is the ideal generated by all monomials $x_ix_jx_k$ such that $\{v_i,v_j,v_k\} \in \C$. In view of the above theorem, it is reasonable to ask whether for the edge ideal $I=I(\C)$ of a $3$-uniform clutter $\C$, the property $\mathrm{index}(I)>1$ is translated to a combinatorial property of $\C$. To this end, we interpret the above theorem as follows: If $I=I(G)$ is the edge ideal of a graph $G$,
then the followings are equivalents:
\begin{itemize}
\item[(i)] the complement $\bar{G}$ of $G$ is $C_4$-free, where $C_4$ stands for the cycle of length $4$;
\item[(ii)] $\mathrm{index}(I)>1$;
\item[(iii)] $\mathrm{index}(I^k)>1$, for some $k$.
\end{itemize}
Here we consider $C_4$ as the minimal triangulation of $1$-sphere that is not the triangle. Note that the equivalence of (i) and (ii) has been proved first in \cite[Corollary 2.9]{hd-ch-js}. We show in Theorem~\ref{index of gap-free} and Proposition~\ref{index of some power} that if $I=I(\C)$ is the edge ideal of a $3$-uniform clutter $\C$, then the followings are equivalent:
\begin{itemize}
\item[(i)] the complement $\bar{\C}$ of $\C$ is $\mathscr{C}$-free;
\item[(ii)] $\mathrm{index}(I)>1$;
\item[(iii)] $\mathrm{index}(I^k)>1$, for some $k$,
\end{itemize}
where $\mathscr{C}$ refers to a set of four $3$-uniform clutters whose three of them come from the minimal triangulation of $2$-sphere that is not the tetrahedron (see Remark~\ref{betti numbers of bi-pyramids} for precise definition of $\mathscr{C}$). Indeed, as we will see later, the condition that $\bar{\C}$ is $\mathscr{C}$-free has strong implications for the lcm-lattice. As a byproduct of the above result, it follows that $\ind(I(\C))>1$ if $\C$ is a $3$-uniform clutter with $|\C|>\binom{|V(\C)|}{3}-6$. We will see in Proposition~\ref{mu(I)>C(n,d)-2d} that indeed $\ind(I(\C))=\infty$ if $\C$ is a $d$-uniform clutter with $|\C|>\binom{|V(\C)|}{d}-2d$.

If $I$ is a (square-free) monomial ideal generated in degree $2$ with $\mathrm{index}(I)>1$, then Theorem~\ref{main theorem of MJR} implies that $\mathrm{index}(I^k)>1$ for all $k$. Examples show that the same statement does not hold for ideals generated in degree greater than $2$. However, in Theorem~\ref{index with n <= 5} we show that for any square-free monomial ideal $I\subseteq\KK[x_1,\ldots,x_5]$ with $\mathrm{index}(I)>1$ we have $\mathrm{index}(I^k)>1$ for all $k$. Moreover, in Theorem~\ref{index(I)>1 and index(I2)>1} we characterize all square-free monomial ideals generated in degree $3$ such that $\mathrm{index}(I)>1$ and $\mathrm{index}(I^2)=1$. It turns out that if $I=I(\C)$ is the edge ideal of a $3$-uniform clutter $\C$ and $\mathrm{index}(I)>1$, then $\mathrm{index}(I^2)>1$ if and only if $\C$ is $\mathscr{D}$-free where $\mathscr{D}$ is the family of clutters defined by Figures \ref{d=3, n=6}, \ref{d=3, n=8}, \ref{d=3, n=7}.
\section{Preliminaries}
Throughout, let $\mathbb{K}$ be a field and $S=\mathbb{K}[x_1,\ldots,x_n]$ be the polynomial ring over $n$ variables. In this section, we quickly review basic notions and preliminaries that we will meet in the sequel.
\subsection{Multigraded Betti numbers}
Let $I \subset S$ be a monomial ideal in the polynomial ring $S$. Then $I$ is a multigraded $S$-module and so it admits a minimal multigraded free $S$-resolution
$$\cdots \to F_2 \to F_1 \to F_0 \to I \to 0,$$
where $F_i = \bigoplus_{\textbf{\rm \textbf{a}} \in \mathbb{Z}^n} S \left( -\textbf{\rm \textbf{a}} \right)^{\beta_{i,\textbf{\rm \textbf{a}}}(I)}$. The numbers $\beta_{i,\textbf{\rm \textbf{a}}}(I) = \dim_\mathbb{K} \mathrm{Tor}^S_i \left( \mathbb{K}, I \right)_{\textbf{\rm \textbf{a}}}$ are called the \textit{multigraded Betti numbers} of $I$.
For $\textbf{\rm \textbf{a}}=(a_1, \ldots, a_n) \in \mathbb{Z}^n$, let $|\textbf{\rm \textbf{a}}|= \sum_i a_i$. Define
\[\beta_{i,j}(I) = \sum\limits_{\textbf{\rm \textbf{a}} \in \mathbb{Z}^n \atop |\textbf{\rm \textbf{a}}|=j} \beta_{i, \textbf{\rm \textbf{a}}} (I).\]
The numbers $\beta_{i,j} (I)$ are the \textit{graded Betti numbers} of $I$ with respect to the standard grading on $S$ (i.e. $\deg(x_i)=1$). A monomial ideal $I \subset S$ is said to have a \textit{$d$-linear resolution} if $\beta_{i,j} (I) =0$, for all $i,j$ with $j-i\neq d$. 

\subsection{Green-Lazarsfeld index}
The Green-Lazarsfeld index (or simply index) counts the number of linear steps in the graded minimal free resolution of an ideal. The monomial ideal $I$ is \textit{$r$-step linear} if $I$ has a linear resolution up to homological degree $r$; in other words, if $I$ is generated by homogeneous elements in degree $d$ and $\beta_{i,j}(I)=0$ for all pairs $(i, j)$ with $0 \leq i \leq r$ and $j-i > d$. The quantity
\[ \ind(I) = \sup\{ r \colon\quad I \text{ is $r$-step linear} \}+ 1\]
is called the \textit{index} of $I$. In particular, $I$ has a linear resolution if and only if $\ind(I) = \infty$. A monomial ideal $I$ is called \textit{linearly presented} if $\ind(I)>1$.

\subsection{Clutters and their associated ideals}
A \textit{clutter} $\mathcal{C}$ on vertex set $V=\{v_1, \ldots, v_n\}$ is a collection of subsets of $V$, called \textit{circuits} of $\mathcal{C}$, such that $V=\cup\C$ and if $F_1$ and $F_2$ are distinct circuits, then $F_1 \nsubseteq F_2$.
A clutter $\C$ is called \textit{$d$-uniform} if every circuit of $\C$ has $d$ vertices.  Let $\textbf{x}_\varnothing = 0$ and $\textbf{x}_F=\prod_{v_i \in F}{x_i}$ for any non-empty subset $F$ of $V$. For a non-empty clutter $\mathcal{C}$ we define the ideal $I \left( \mathcal{C} \right)$ to be
\[I(\mathcal{C}) = \left(  \textbf{x}_T \colon \quad T \in \mathcal{C} \right),\]
and we  set $I(\varnothing) = 0$. The ideal $I \left( {\mathcal{C}} \right)$ is called the \textit{edge ideal} of $\mathcal{C}$. 
%
%
If $\mathcal{C}$ is a $d$-uniform clutter on $V$, then the \textit{complement} $\bar{\mathcal{C}}$ of $\mathcal{C}$ is defined to be
\begin{equation*}
\bar{\mathcal{C}} = \{F \subset V \colon \quad |F|=d, \,F \notin \mathcal{C}\}.
\end{equation*}
Let $\mathcal{C}$ be a clutter and let $e$ be a subset of $V$. By $\mathcal{C} \setminus e$ we mean the clutter $\left\{F \in \mathcal{C} \colon \ e \nsubseteq F \right\}$.


\subsection{Simplicial complexes and Stanley-Reisner ideals}
A \textit{simplicial complex} $\Delta$ on the vertex set $V=\{v_1, \ldots, v_n\}$ is a collection of subsets of $V$, called \textit{faces}, such that $\{ v_i \} \in \Delta$  for all $i$ and, $F \in \Delta$ implies that all subsets of $F$ are also in $\Delta$. Maximal faces of $\Delta$ are called \textit{facets} of $\Delta$. A simplicial complex with facets $F_1,\ldots,F_m$ is denoted by $\gen{F_1,\ldots,F_m}$. The \textit{induced} subcomplex $\Delta[W]$ of $\Delta$ is the simplicial complex $\Delta[W]=\{F \in \Delta \colon \; F \subseteq W \}$. Also, the Stanley-Reisner ideal $I_\Delta$ is the square-free monomial ideal generated by all $\bx_F$ with $F\notin\Delta$.

Let $\mathcal{C}$ be a $d$-uniform clutter on the vertex set $[n]:=\{1,\ldots,n\}$. A subset $F$ of $[n]$ is a \textit{clique} in $\C$ if either $|F|<d$ or else all $d$-subsets of $F$ belong to $\C$. The \textit{clique complex} $\Delta (\C )$ of $\mathcal{C}$ is the simplicial complex whose faces are cliques of $\mathcal{C}$. It is easy to see that $I(\C)=I_{\Delta(\bar{\C})}$.

\subsection{Gasharov-Peeva-Welker formula}
In the following, by a \textit{poset} we mean a partially ordered set. A subset $C$ of a poset $P$ is called a \textit{chain} in $P$ if $C$ is a totally ordered subset of $P$ with respect to the induced order.

Let $I$ be a monomial ideal and $\mathcal{G}(I) = \{u_1, \ldots, u_m\}$ be the unique minimal
set of monomial generators of $I$. We denote by $L(I)$ the \textit{lcm-lattice} of $I$, that is a poset whose elements are labeled by the least common multiples of subsets of monomials in $\mathcal{G}(I)$ ordered by divisibility. For any $u \in L(I)$ we denote by $(1, u)$ the open interval of $L(I)$, which is by definition
\[(1,u)=\{ v \in L(I) \colon \; 1<v<u \}. \]
Furthermore, we denote by $\Delta_I(1, u)$ the order complex of the poset $(1, u)$, that is the simplicial complex whose faces are the chains in the poset $(1, u)$.
\begin{theorem}[{\cite[Theorem 2.1]{GPW}}]\label{Gasharov-Peeva-Welker formula}
For all $i \geq 0$ and all $u \in L(I)$,
\[ \beta_{i,u}(I) = \dim_\KK \tilde{H}_{i-1} \left( \Delta_I(1, u);\KK \right).\]
\end{theorem}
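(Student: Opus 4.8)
The plan is to read the multigraded Betti numbers off the Koszul complex, reinterpret the resulting chain complex as the simplicial chain complex of an auxiliary complex on the variables, and then transport that complex onto $\Delta_I(1,u)$ by two standard topological reductions. Fix $u\in L(I)$ with $u>1$ (the case $u=1$ being vacuous) and write $\G(I)=\set{u_1,\dots,u_m}$. Let $K_\bullet$ be the Koszul complex on $x_1,\dots,x_n$, a free resolution of $\KK$, with basis $e_\sigma$ ($\sigma\subseteq[n]$) placed in the multidegree of $\bx_\sigma$. Then $\beta_{i,u}(I)=\dim_\KK H_i\big((K_\bullet\otimes_S I)_u\big)$, and the degree-$u$ strand $(K_i\otimes I)_u$ has $\KK$-basis the symbols $e_\sigma$ with $|\sigma|=i$, $\sigma\subseteq\supp(u)$, and $u/\bx_\sigma\in I$. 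Since a monomial multiple of an element of $I$ again lies in $I$, the set
\[ \Gamma_u=\set{\sigma\subseteq\supp(u)\colon\ u/\bx_\sigma\in I} \]
is closed under taking faces and hence is a simplicial complex on $\supp(u)$; moreover, under $e_\sigma\leftrightarrow\sigma$ the Koszul differential becomes, up to sign, the simplicial boundary map. Thus $(K_\bullet\otimes I)_u$ is the augmented simplicial chain complex of $\Gamma_u$, shifted by one, so that $\beta_{i,u}(I)=\dim_\KK\tilde H_{i-1}(\Gamma_u;\KK)$, and it remains only to prove that $\Gamma_u$ and $\Delta_I(1,u)$ are homotopy equivalent.

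For the first reduction I would attach to each generator $u_j$ dividing $u$ the vertex set $W_j=\set{v\in\supp(u)\colon\ x_vu_j\mid u}$, and check two elementary facts: $\sigma\in\Gamma_u$ if and only if $\sigma\subseteq W_j$ for some $j$ with $u_j\mid u$, and each such $W_j$ is itself a face of $\Gamma_u$. Hence $\Gamma_u=\bigcup_{u_j\mid u}\gen{W_j}$ is a union of full simplices. If $u$ is not a minimal generator, every $W_j$ is nonempty and every nonempty intersection $\gen{W_{j_1}}\cap\cdots\cap\gen{W_{j_k}}=\gen{W_{j_1}\cap\cdots\cap W_{j_k}}$ is again a full simplex, so the Nerve Lemma gives $\Gamma_u\simeq\mathcal N$, the nerve of the cover $\set{\gen{W_j}}$. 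A short computation with exponent vectors shows $W_{j_1}\cap\cdots\cap W_{j_k}\neq\varnothing$ exactly when $\lcm(u_{j_1},\dots,u_{j_k})\neq u$, so $\mathcal N$ is the simplicial complex on the set of generators dividing $u$ whose faces are the subsets $\sigma$ with $\lcm(u_j\colon j\in\sigma)\neq u$. (If $u$ is a minimal generator, then $\Gamma_u=\gen{\varnothing}$, and $\beta_{i,u}(I)$ equals $1$ for $i=0$ and $0$ otherwise, matching the claimed formula.)

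For the second reduction, observe that the interval $[1,u]$ of the lattice $L(I)$ is itself a finite lattice with join given by $\lcm$, and that its atoms are precisely the minimal generators dividing $u$. Björner's crosscut theorem, applied with the crosscut consisting of these atoms, identifies up to homotopy the order complex $\Delta_I(1,u)$ of $(1,u)=[1,u]\setminus\set{1,u}$ with the crosscut complex $\set{\sigma\colon\ \lcm(u_j\colon j\in\sigma)\neq u}$, which is exactly $\mathcal N$. Chaining the two equivalences, $\Gamma_u\simeq\mathcal N\simeq\Delta_I(1,u)$, whence $\tilde H_{i-1}(\Gamma_u;\KK)\cong\tilde H_{i-1}(\Delta_I(1,u);\KK)$; combined with the Koszul computation this gives $\beta_{i,u}(I)=\dim_\KK\tilde H_{i-1}(\Delta_I(1,u);\KK)$, as desired.

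I expect the main obstacle to lie in the first reduction: $\Gamma_u$ is a complex on the variables, whereas $\mathcal N$ and $\Delta_I(1,u)$ are built from the generators, so the task is to arrange the cover $\set{\gen{W_j}}$ so that its intersection pattern is governed precisely by the lattice condition $\lcm(u_{j_1},\dots,u_{j_k})\neq u$. This is honest multigraded book-keeping — with non-squarefree exponents one must track multiplicities rather than mere supports — and it is the step demanding care; the crosscut reduction is then a black-box invocation of a standard tool.
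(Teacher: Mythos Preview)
The paper does not supply a proof of this theorem; it is quoted as a known result from \cite{GPW}, so there is no in-house argument to compare against. Your proposal is correct and is, in fact, a faithful reconstruction of the original Gasharov--Peeva--Welker proof: compute $\beta_{i,u}(I)$ via the Koszul strand $(K_\bullet\otimes I)_u$, recognise this strand as the augmented chain complex of the ``Koszul simplicial complex'' $\Gamma_u$, collapse $\Gamma_u$ to the nerve of the cover by the simplices $\gen{W_j}$ using the Nerve Lemma, and finally identify that nerve with $\Delta_I(1,u)$ via the crosscut theorem applied to the atoms of the interval $[1,u]$. Your bookkeeping with the sets $W_j=\{v\in\supp(u)\colon x_vu_j\mid u\}$ is exactly what is needed in the non-squarefree case, and your separate treatment of the case where $u$ is itself a minimal generator (so that $(1,u)=\varnothing$ and $\Gamma_u=\{\varnothing\}$) correctly handles the boundary convention $\tilde H_{-1}(\{\varnothing\};\KK)=\KK$.
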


\section{Index of equigenerated square-free monomial ideals and their powers}
Let $\C$ be a clutter on vertex set $V$ and $W$ be a subset of $V$. The \textit{induced} subclutter $\C[W]$ of $\C$ is the clutter $\C[W]=\{F \in \C \colon \; F \subseteq W \}$. Assume that $\mathscr{F}$ is a family of clutters. We say that $\C$ is \textit{$\mathscr{F}$-free} if $\C$ does not contain an induced subclutter isomorphic to any element in  $\mathscr{F}$. Let $G$ be a graph and $I=I(G)$ be the edge ideal of $G$. It follows from \cite[Corollary 2.9]{hd-ch-js} that $\bar{G}$ is $C_4$-free if and only if $\mathrm{index}(I)>1$. We intend to present a similar result for $3$-uniform clutters (Theorem \ref{index of gap-free}). To do this, we need the following setting: Let $I$ be a monomial ideal whose minimal generators are all of degree $d$. In \cite{mb-jh-rzn} the authors define a graph $G_I$ whose vertex set is $\mathcal{G}(I)$ and for which $\{u, v\}$ is an edge of $G_I$ if and only if $\deg(\lcm(u, v)) = d + 1$. For all $u, v \in \mathcal{G}(I)$, let $G_I^{u,v}$ be the induced subgraph of $G_I$ with vertex set
\[\{w \in \mathcal{G}(I) \colon \; w \text{ divides } \lcm(u, v)\}.\]

The following result obtained simply from the proofs of \cite[Proposition 1.1 and Corollary 1.2]{mb-jh-rzn} and also Theorem \ref{Gasharov-Peeva-Welker formula} plays a crucial role in this paper. 

\begin{theorem} \label{index in terms of graph}
Let $I$ be a monomial ideal generated in degree $d$. Then 
\begin{itemize}
\item[\rm (i)]if $u,v\in\G(I)$ are such that $\deg\lcm(u,v)>d+1$ and $\Delta_I(1,\lcm(u,v))$ is disconnected, then $u,v$ belong to distinct connected components of $G_I^{u,v}$;
\item[\rm (ii)]if $\Delta_I(1,\lcm(u,v))$ is connected for all $u,v\in\G(I)$, then so is $G_I^{u,v}$ for all $u,v\in\G(I)$.
\end{itemize}
In particular, $\ind(I)>1$ if and only if for all $u, v \in \mathcal{G}(I)$ there is a path in $G_I^{u,v}$ connecting $u$ and $v$.
\end{theorem}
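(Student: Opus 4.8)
The plan is to route the whole statement through the Gasharov--Peeva--Welker formula (Theorem~\ref{Gasharov-Peeva-Welker formula}), following the strategy of \cite{mb-jh-rzn}. First I would record the translation of the index condition. Since $I$ is generated in degree $d$, one has $\ind(I)>1$ exactly when $\beta_{1,j}(I)=0$ for every $j\ge d+2$; writing $\beta_{1,j}(I)=\sum_{u\in L(I),\,\deg u=j}\beta_{1,u}(I)$ and using Theorem~\ref{Gasharov-Peeva-Welker formula}, which identifies $\beta_{1,u}(I)$ with $\dim_\KK\tilde{H}_0(\Delta_I(1,u))$, this says precisely that $\Delta_I(1,w)$ is connected for every $w\in L(I)$ with $\deg w\ge d+2$. (For such a $w$ the poset $(1,w)$ is nonempty, as $w$ is the lcm of at least two generators, each of degree $d<\deg w$ and hence lying strictly between $1$ and $w$.) So everything becomes a question about connectivity of the complexes $\Delta_I(1,w)$.

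The second step is to replace each $\Delta_I(1,w)$ by a graph on generators. For $w\in L(I)$ let $H_w$ be the graph whose vertices are the minimal generators dividing $w$ and whose edges are the pairs $\{g,g'\}$ for which $\lcm(g,g')$ is a proper divisor of $w$. I would show that the connected components of $\Delta_I(1,w)$ and of $H_w$ are in natural bijection. For one inclusion, each $v\in(1,w)$ is the lcm of the generators dividing it, and any two of those generators $g,g'$ satisfy $\lcm(g,g')\mid v<w$, so they are $H_w$-adjacent; hence sending $v$ to the $H_w$-component of any generator below it is a well-defined map $(1,w)\to\{\text{components of }H_w\}$ that is constant along edges of $\Delta_I(1,w)$, hence on each component. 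For the reverse, each edge $\{g,g'\}$ of $H_w$ gives the chain-path $g,\lcm(g,g'),g'$ inside $\Delta_I(1,w)$, and every vertex of $\Delta_I(1,w)$ is adjacent to some generator; combining these observations yields the bijection. In particular $\Delta_I(1,w)$ is connected iff $H_w$ is, and two generators dividing $w$ lie in the same component of $\Delta_I(1,w)$ iff they lie in the same component of $H_w$. (Alternatively this step can be extracted from the crosscut/nerve theorem applied to the atoms of $[1,w]$.)

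Next I would establish the two items. For (i) I would prove the contrapositive: if $u$ and $v$ lie in the same component of $G_I^{u,v}$, they lie in the same component of $\Delta_I(1,\lcm(u,v))$. Indeed, a path $u=g_0,g_1,\dots,g_k=v$ in $G_I^{u,v}$ consists of generators dividing $w:=\lcm(u,v)$ with $\deg\lcm(g_i,g_{i+1})=d+1<\deg w$ (using $\deg w>d+1$), so each $\lcm(g_i,g_{i+1})$ is a proper divisor of $w$, each $\{g_i,g_{i+1}\}$ is an edge of $H_w$, and by the bijection above $u$ and $v$ lie in the same component of $\Delta_I(1,w)$. For (ii) I would induct on $\deg\lcm(u,v)$. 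If $\deg\lcm(u,v)=d+1$, every pair of generators dividing $\lcm(u,v)$ has lcm of degree $d+1$, so $G_I^{u,v}$ is complete, hence connected. If $\deg\lcm(u,v)=D\ge d+2$ and $w:=\lcm(u,v)$, then $H_w$ is connected by the second step and the hypothesis, and each edge $\{g,g'\}$ of $H_w$ has $\deg\lcm(g,g')<D$: if that degree is $d+1$ the edge already lies in $G_I^{u,v}$, and otherwise $d+2\le\deg\lcm(g,g')<D$, so by induction $G_I^{g,g'}$ is connected, and it is an induced subgraph of $G_I^{u,v}$ because $\lcm(g,g')\mid w$; either way $g$ and $g'$ are joined by a path in $G_I^{u,v}$. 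As $H_w$ and $G_I^{u,v}$ have the same vertex set, $G_I^{u,v}$ is connected.

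Finally I would assemble the equivalence. If $\ind(I)>1$, the first step gives that $\Delta_I(1,\lcm(u,v))$ is connected whenever $\deg\lcm(u,v)\ge d+2$, so (ii) makes every $G_I^{u,v}$ connected and in particular each contains a $u$--$v$ path (the cases $\deg\lcm(u,v)\le d+1$ being immediate). Conversely, if $\ind(I)=1$, pick $w\in L(I)$ with $\deg w\ge d+2$ and $\beta_{1,w}(I)\ne0$; then $\Delta_I(1,w)$, hence $H_w$, is disconnected, so there are generators $u,v\mid w$ in distinct components of $H_w$, which forces $\lcm(u,v)=w$ since $\lcm(u,v)\mid w$ cannot be a proper divisor. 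As $\deg w>d+1$, the graph $G_I^{u,v}$ is a spanning subgraph of $H_w$ (each of its edges $\{g,g'\}$ has $\deg\lcm(g,g')=d+1<\deg w$), so $u$ and $v$ lie in distinct components of $G_I^{u,v}$ and there is no $u$--$v$ path. The step I expect to be the crux is the induction in (ii): one must check that an $H_w$-edge whose lcm has degree above $d+1$ can be replaced by a path every edge of which has lcm of degree exactly $d+1$ and every vertex of which divides $\lcm(u,v)$; this is exactly where the assumption that all lower $\Delta_I(1,\cdot)$ are connected gets consumed, and it is the only non-formal ingredient beyond Theorem~\ref{Gasharov-Peeva-Welker formula}.
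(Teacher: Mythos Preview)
The paper does not supply its own proof but attributes the result to \cite[Proposition~1.1 and Corollary~1.2]{mb-jh-rzn} together with Theorem~\ref{Gasharov-Peeva-Welker formula}, and your route---reducing via Gasharov--Peeva--Welker to connectivity of order complexes, identifying components of $\Delta_I(1,w)$ with those of the atom graph $H_w$, and then inducting on $\deg\lcm(u,v)$---is precisely that approach. Your treatment of (ii) and of the final equivalence is correct; in particular the induction in (ii) only consumes connectivity of $\Delta_I(1,w)$ for $\deg w\ge d+2$, which is exactly what $\ind(I)>1$ supplies, so the ``in particular'' clause is fully established.

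There is, however, a genuine issue in your handling of (i). What you call the contrapositive---``if $u,v$ lie in the same component of $G_I^{u,v}$ then they lie in the same component of $\Delta_I(1,\lcm(u,v))$''---is not the contrapositive of (i) as stated: it only shows that $u$ and $v$ land in one component of $\Delta_I(1,w)$, not that $\Delta_I(1,w)$ is connected. In fact (i) as literally written is false. Take $d=4$ and $I=(x_1x_2x_3x_4,\ x_1x_2x_5x_6,\ x_3x_4x_5x_6,\ x_1x_2x_3x_5)$ with $u=x_1x_2x_3x_4$ and $v=x_1x_2x_5x_6$; then $w=\lcm(u,v)=x_1\cdots x_6$ has degree $6>d+1$, the generator $g=x_3x_4x_5x_6$ is an isolated vertex of $\Delta_I(1,w)$ (its lcm with each of the other three generators is $w$), so $\Delta_I(1,w)$ is disconnected, yet $u-x_1x_2x_3x_5-v$ is a path in $G_I^{u,v}$. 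The statement your argument actually establishes---``if $u,v$ lie in distinct components of $\Delta_I(1,\lcm(u,v))$ then they lie in distinct components of $G_I^{u,v}$''---is true and is all that is ever used later in the paper, so the defect is in the formulation of (i) rather than in your strategy; but you should flag that you are proving this corrected version and not claim to have derived (i) verbatim.
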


\begin{remark} \label{betti numbers of bi-pyramids}
Let $\B$ be the following bi-pyramid, $\B_1=\B \cup \{125\}$, and $\B_2=\B \cup \{125, 135\}$. Since $\gen{\B}$ is a triangulation of the $2$-sphere, $\beta_{1,5}(I(\bar{\B}))=\dim_\KK\tilde{H}_2(\Delta(\bar{\B});\KK)=1$. Hence, by \cite[Theorem 2.1]{BYZ}, we have
\[1=\beta_{1,5}(I(\bar{\B}))=\beta_{1,5}(I(\bar{\B}_1))=\beta_{1,5}(I(\bar{\B}_2)).\]

\begin{center}
\begin{figure}[!htp]
\begin{tikzpicture}[scale=1]
\node [draw, circle, fill=white, inner sep=1pt, label=above:\footnotesize{$1$}] (1) at (5.0,3.2) {};
\node [draw, circle, fill=white, inner sep=1pt, label=left:\footnotesize{$2$}] (2) at (3.8,2.0) {};
\node [draw, circle, fill=white, inner sep=1pt, label=right:\footnotesize{$3$}] (3) at (6.2,2.0) {};
\node [draw, circle, fill=white, inner sep=1pt, label=340:\footnotesize{$4$}] (4) at (4.7,1.65) {};
\node [draw, circle, fill=white, inner sep=1pt, label=below:\footnotesize{$5$}] (5) at (5.0,0.8) {};
\draw (1)--(2)--(4)--(3)--(1)--(4)--(5)--(3) (2)--(5);
\draw [dashed] (2)--(3);
\end{tikzpicture}
\end{figure}
\end{center}
On the other hand, if $\B'=\binom{[6]}{3}\setminus\{123, 456\}$ then one may easily check that $\beta_{1,6}(I(\bar{\B}'))=1$. Let $\mathscr{C}$ be the family $\{ \B,\ \B_1,\ \B_2,\ \B'\}$ of clutters. The following theorem shows that this family of clutters tends to a characterization of all square-free monomial ideals $I$ such that $\ind (I) >1$.
\end{remark}

\begin{theorem}\label{index of gap-free}
Let $\C$ be a $3$-uniform clutter and let $I=I(\C)$ be its edge ideal. Let $\mathscr{C}$ be the family of clutters as in the above remark. The following conditions are equivalent:
\begin{itemize}
\item[\rm (a)] $\bar{\C}$ is $\mathscr{C}$-free;
\item[\rm (b)] $\ind (I) >1$, that is $I$ is linearly presented;
\item[\rm (c)] $\beta_{1,5} (I)= \beta _{1,6} (I)=0$.
\end{itemize}
\end{theorem}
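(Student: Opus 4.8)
The plan is to prove the cycle of implications $(b) \Rightarrow (c) \Rightarrow (a) \Rightarrow (b)$. The implication $(b) \Rightarrow (c)$ is immediate from the definition: if $\ind(I)>1$ then $I$ is $1$-step linear, so $\beta_{1,j}(I)=0$ for every $j>d+1=4$, in particular for $j=5,6$. For $(c)\Rightarrow(a)$ I argue by contraposition. Suppose $\bar\C$ has an induced subclutter isomorphic to some member of $\mathscr{C}$ on a vertex set $W$; then $\C[W]$ is isomorphic to $\bar\B$, $\bar\B_1$ or $\bar\B_2$ when $|W|=5$, and to $\bar\B'$ when $|W|=6$. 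The key observation is that $\beta_{1,\bx_W}(I)=\beta_{1,\bx_W}(I(\C[W]))$: by Theorem~\ref{Gasharov-Peeva-Welker formula} both equal $\dim_\KK\tilde H_0(\Delta_I(1,\bx_W))$, and the open interval $(1,\bx_W)$ in $L(I)$ consists only of least common multiples of generators $\bx_F$ with $F\subseteq W$, i.e. of generators of $I(\C[W])$, so the two intervals coincide as posets. Combining this with the computations recorded in Remark~\ref{betti numbers of bi-pyramids} gives $\beta_{1,\bx_W}(I)=1$, hence $\beta_{1,5}(I)\neq0$ or $\beta_{1,6}(I)\neq0$, so $(c)$ fails.

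For $(a)\Rightarrow(b)$ I use the criterion of Theorem~\ref{index in terms of graph}: it suffices to show that for any two generators $u=\bx_F$, $v=\bx_{F'}$ of $I$ there is a path in $G_I^{u,v}$ joining them. Put $W=F\cup F'$; the vertex set of $G_I^{u,v}$ is $\{\bx_G\colon G\in\C[W]\}$. If $|W|\le4$ this is trivial ($u=v$, or $\deg\lcm(u,v)=4$ so $u\sim v$ in $G_I$). Suppose $|W|=5$. Associating to each $3$-subset of $W$ its complementary pair inside $W$ identifies $G_I^{u,v}$ with the line graph $L(\gamma)$, where $\gamma$ is the graph on $W$ with edge set $\{W\setminus G\colon G\in\C[W]\}$; under this identification $u$ and $v$ correspond to the two \emph{disjoint} edges $W\setminus F$ and $W\setminus F'$ of $\gamma$. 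Since two edges of $\gamma$ lie in the same component of $L(\gamma)$ exactly when they lie in the same component of $\gamma$, we need only exclude the possibility that $\gamma$ separates $W\setminus F$ from $W\setminus F'$. Inspecting graphs on five vertices shows that this happens precisely when the complement $\bar\gamma$ is $K_{2,3}$, or $K_{2,3}$ together with one or two edges inside its part of size three — equivalently, precisely when $\bar\C[W]$ is isomorphic to $\B$, $\B_1$ or $\B_2$, which $(a)$ forbids (note $\bar\C[W]$ is an induced subclutter of $\bar\C$, hence $\mathscr{C}$-free).

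It remains to treat $|W|=6$, i.e. $F\cap F'=\varnothing$. If $\C[W]=\{F,F'\}$, then $\bar\C[W]=\binom{W}{3}\setminus\{F,F'\}$ is isomorphic to $\B'$, contradicting $(a)$; so $\C$ contains a triple $G\subseteq W$ with $G\notin\{F,F'\}$. Since $|G\cap F|+|G\cap F'|=3$, after swapping $F$ and $F'$ if necessary we may assume $|G\cap F|=2$, so $u$ and $\bx_G$ are adjacent in $G_I$. Now $G\cup F'$ has five vertices, $\bar\C[G\cup F']$ is an induced subclutter of $\bar\C$ and hence $\mathscr{C}$-free, and $|G\cap F'|=1$; so the $|W|=5$ case applied to $G\cup F'$ produces a path from $\bx_G$ to $v$, all of whose vertices divide $\bx_{G\cup F'}$ and therefore lie in $G_I^{u,v}$. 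Concatenating $u\sim\bx_G$ with this path yields a path from $u$ to $v$, which finishes $(a)\Rightarrow(b)$.

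The crux is the five-vertex inspection in $(a)\Rightarrow(b)$: one must verify that the \emph{only} configurations on five vertices for which $u$ and $v$ fail to be connected in $G_I^{u,v}$ are precisely those coming from $\B$, $\B_1$, $\B_2$. This is a finite check, but it is exactly what pins down the family $\mathscr{C}$; the remainder of the argument is bookkeeping with Theorems~\ref{Gasharov-Peeva-Welker formula} and~\ref{index in terms of graph}, together with the stability of multigraded Betti numbers under restriction to an induced subclutter.
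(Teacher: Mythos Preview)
Your proof is correct, and your approach to $(a)\Rightarrow(b)$ is genuinely different from the paper's. The paper treats the case $\deg\lcm(u,v)=5$ by an ad hoc case analysis (showing $\{124,125,134,135\}\subseteq\bar\C$ and then examining four auxiliary sets $A_1,\ldots,A_4$), and treats $\deg\lcm(u,v)=6$ by a computer search over $105$ configurations arising from orbits of a symmetry group. Your line-graph identification $G_I^{u,v}\cong L(\gamma)$ for $|W|=5$ is cleaner and makes the appearance of $\B,\B_1,\B_2$ transparent: they are exactly the complements (under $G\mapsto W\setminus G$) of the three isomorphism types of $\gamma$ in which two prescribed disjoint edges lie in different components. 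More significantly, your $|W|=6$ argument avoids the computer check entirely: once a third circuit $G\in\C[W]$ exists (forced by $\B'$-freeness), the step $u\sim\bx_G$ together with the already-established five-vertex case on $G\cup F'$ closes the argument. This is a real simplification over the paper's proof.

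For $(c)\Rightarrow(a)$ your argument is essentially the paper's, though you make the identification $(1,\bx_W)_{L(I)}=(1,\bx_W)_{L(I(\C[W]))}$ explicit where the paper appeals to it only in the $\B'$ case and quotes \cite{BYZ} for the others; note that $\bx_W$ does lie in $L(I)$ in each case, since in every one of $\bar\B,\bar\B_1,\bar\B_2,\bar\B'$ two of the circuits already have union $W$.
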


\begin{proof}
$a \Rightarrow b$: Suppose that $\bar{\C}$ is $\mathscr{C}$-free. Using Theorem~\ref{index in terms of graph}, it is enough to show that for all $u,v \in \mathcal{G}(I)$ there is a path in $G:=G_{I}^{u,v}$ between $u$ and $v$. Let $u= x_i x_j x_k$ and $v=x_r x_s x_t$. 

If $\deg\lcm(u,v)=4$, then $\{u,v\} \in E(G)$ and we have nothing to prove. Suppose that $\deg \lcm(u,v)=5$. Without loss of generality, we may assume that $u=x_1 x_2 x_3$ and $v=x_1 x_4 x_5$. Suppose on the contrary that there is no path in $G$ between $u$ and $v$. Let $X= \{124, 125, 134, 135\}$. If $\bx_F$ is a vertex of $G$ for some $F \in X$, then $u-\bx_F-v$ is a path in $G$ between $u$ and $v$, a contradiction. Thus $X \subseteq \bar{\C}$. Now put
\begin{align*}
A_1&= \set{x_2 x_3 x_4, x_2 x_4 x_5},&&&A_2&= \set{x_2 x_3 x_4, x_3 x_4 x_5},\\
A_3&= \set{x_2 x_3 x_5, x_2 x_4 x_5},&&&A_4&= \set{x_2 x_3 x_5, x_3 x_4 x_5}.
\end{align*}
Observe that $A_i\nsubseteq V(G)$, for $i=1,\ldots,4$ for otherwise $A_i\cup\{u,v\}$ yields a path between $u$ and $v$ in $G$. Hence $\bar{\C}\cap A_i\neq\varnothing$, for $i=1,\ldots,4$. A simple verification shows that the clutter induced by $X$ and those circuits $F\in\bar{\C}$ with $\bx_F\in A_1\cup\cdots\cup A_4$ contains one of the elements of $\mathscr{C}$ contradicting the $\mathscr{C}$-freeness of $\bar{\C}$.

Next assume that $\deg \lcm(u,v)=6$. Without loss of generality, assume that $u=x_1 x_2 x_3$ and $v=x_4 x_5 x_6$. We claim that there is a path in $G$ connecting
$u$ to $v$ (of length $3$). Suppose on the contrary that there is no path between $u$ and $v$ in $G$. Let $B_{ijab}=\{x_i x_j x_a, x_i x_a x_b\}$ for all $1\leq i<j\leq 3$ and $4\leq a<b\leq 6$. Then $B_{ijab}\nsubseteq V(G)$ because otherwise $B_{ijab}\cup\{u,v\}$ is a path of length $3$ in $G$ between $u$ and $v$, contradicting the assumption. The group $H=\gen{(1\ 2\ 3), (1\ 2), (4\ 5\ 6), (4\ 5)}$ acts naturally on the set
\[\Omega:=\set{x_i x_j x_a\colon\ 1\leq i<j\leq3,\ 4\leq a\leq 6}\]
by $(x_ix_jx_a)^\pi=x_{i^\pi}x_{j^\pi}x_{a^\pi}$ for all $\pi\in H$ and $x_ix_jx_a\in\Omega$. Suppose $X:=\Omega\cap V(G)$ is a $k$-subset of $\Omega$, for some $0\leq k\leq9$. Then $X$ belongs to an orbit of $H$ on $\binom{\Omega}{k}$, the set of all $k$-subsets of $\Omega$. Since all elements of an orbit share the same properties, we can restrict ourselves to a fixed representative of any orbit. Observe that $G$ has $1, 1, 3, 6, 7, 7, 6, 3, 1, 1$ orbits in action on $\binom{\Omega}{k}$, for $k=0,\ldots,9$, respectively, so that we have $36$ cases to consider. For any such case, we have $iab\in\bar{\C}$ (resp. $ija\in\bar{\C}$) whenever $x_ix_jx_a\in X$ (resp. $x_ix_jx_a\in\Omega\setminus X$) because $B_{ijab}\nsubseteq V(G)$. What remains is to consider all possibilities $iab\in\C$ or $iab\in\bar{\C}$ for all $iab$ satisfying $x_ix_jx_a\in X$ whose the number of them is bounded above by $2^5=32$ except for the case where $X=\varnothing$. A simple computer program shows that we have totally $105$ cases and, in any case, $\bar{\C}$ has an element of $\mathscr{C}$ as an induced subclutter. This contradicts the fact that $\bar{\C}$ is $\mathscr{C}$-free.

$(b) \Rightarrow (c)$: This implication is clear by the definition of the index of an ideal.

$(c) \Rightarrow (a)$: Suppose on the contrary that $\bar{\C}$ is not $\mathscr{C}$-free. So $\bar{\C}$ contains an induced subclutter $\D$ isomorphic to one of the elements of $\mathscr{C}$. If $\D \ncong{[6] \choose 3} \setminus \{123, 456\}$, then in view of Remark~\ref{betti numbers of bi-pyramids}, we obtain 
$$\beta_{1,5}(I) \geq \beta_{1,5}(I(\bar{\D}))=1,$$
which contradicts to our assumption. Thus $\D\cong{[6] \choose 3} \setminus \{123, 456\}$. Without loss of generality, assume that $\D={[6] \choose 3} \setminus \{123, 456\}$. Then by setting $\alpha=x_1 \cdots x_6$, the interval $(1,\alpha)$ consists of only two points $x_1x_2x_3$ and $x_4x_5x_6$. Thus $\Delta(1,\alpha)$ is disconnected. It follows that $\beta_{1,6} (I) \geq \beta_{1, \alpha} (I) = \dim \tilde{H}_0(\Delta(1, \alpha); \mathbb{K})=1$, which is a contradiction.
\end{proof}

Theorem \ref{index of gap-free} implies that $\ind(I(\C))>1$ for any $3$-uniform clutter $\C$ on a vertex set $[n]$ with $|\C|>\binom{n}{3}-6$. In the following, we show that an stronger result holds under the same assumptions. This result is not restricted only to $3$-uniform clutters.
\begin{proposition}\label{mu(I)>C(n,d)-2d}
Let $d>1$ be a positive integer and $\kappa_d$ be the smallest size of a $d$-uniform clutter $\C$ such that $I(\bar{\C})$ does not have linear resolution. Then $\kappa_d=2d$. In particular, if $\C$ is a $d$-uniform clutter with $|\C| > \binom{|V(\C)|}{d}-2d$, then $I(\C)$ has a linear resolution. 
\end{proposition}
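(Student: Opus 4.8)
The plan is to reduce the whole proposition to one homological statement, which I will call the \emph{Main Lemma}: if a simplicial complex $\Gamma$ has complete $(d-2)$-skeleton (all subsets of size $\le d-1$ of its vertex set are faces) and at most $2d-1$ faces of dimension $d-1$, then $\tilde H_k(\Gamma;\KK)=0$ for every $k\ge d-1$. Granting this, both assertions are immediate from Hochster's formula (the squarefree case of Theorem~\ref{Gasharov-Peeva-Welker formula}, $\beta_{i,W}(I_\Delta)=\dim_\KK\tilde H_{|W|-i-2}(\Delta[W];\KK)$): writing $I(\C)=I_{\Delta(\bar{\C})}$, a nonlinear Betti number $\beta_{i,j}(I(\C))\ne0$ (one with $j>i+d$) would force $\tilde H_k(\Delta(\bar{\C})[W];\KK)\ne0$ for some $W$ with $|W|=j$ and $k=|W|-i-2\ge d-1$; but $\Delta(\bar{\C})[W]$ has complete $(d-2)$-skeleton and at most $|\bar{\C}|=\binom{|V(\C)|}{d}-|\C|<2d$ faces of dimension $d-1$, which the Main Lemma forbids, so $I(\C)$ has a linear resolution. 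The same argument applied to $\Delta(\C)$ shows that $I(\bar{\C})$ has a linear resolution whenever $\C$ is $d$-uniform with $|\C|\le 2d-1$, i.e.\ $\kappa_d\ge 2d$. For $\kappa_d\le 2d$ I would take $\C$ on vertex set $[d+2]$ to be the boundary of the bipyramid over the $(d-1)$-simplex on $\{3,\dots,d+2\}$ with apices $1,2$ (circuits $\{1\}\cup(\{3,\dots,d+2\}\setminus\{j\})$ and $\{2\}\cup(\{3,\dots,d+2\}\setminus\{j\})$, $j=3,\dots,d+2$, so $|\C|=2d$): since no $(d+1)$-subset of $[d+2]$ is a clique of $\C$, the clique complex $\Delta(\C)$ is exactly this simplicial $(d-1)$-sphere, $\bar{\C}\ne\varnothing$ as $\binom{d+2}{2}>2d$ for $d\ge2$, and $\beta_{1,[d+2]}(I(\bar{\C}))=\dim_\KK\tilde H_{d-1}(\Delta(\C);\KK)=1\ne0$ with $d+2>1+d$ exhibits $I(\bar{\C})=I_{\Delta(\C)}$ as not linearly resolved. (For $d=3$ this is the clutter $\bar{\B}$ of Remark~\ref{betti numbers of bi-pyramids}.)

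For the Main Lemma, first note that $\Gamma$ has at most one face of dimension $d$ and none of higher dimension: two distinct $(d+1)$-cliques meet in $\le d$ vertices, so share $\le 1$ of their $d$-subsets, hence would force at least $2(d+1)-1=2d+1>2d-1$ faces of dimension $d-1$. Thus $\tilde H_k(\Gamma)=0$ for $k\ge d+1$ automatically, and $\tilde H_d=\ker\partial_d=0$ since if $\sigma$ is the unique $d$-face then $\partial[\sigma]\ne0$. The content is the case $k=d-1$, and it rests on the following statement about \emph{small cycles}: in any complex with complete $(d-2)$-skeleton, a nonzero $(d-1)$-cycle $z$ over $\KK$ with $|\supp(z)|\le 2d-1$ is a scalar multiple of $\partial[V']$ for some $(d+1)$-subset $V'$ of the vertices (so $\supp(z)=\binom{V'}{d}$). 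Indeed, every $z\in\ker\partial_{d-1}\subseteq C_{d-1}(\Gamma)$ has support of size $\le 2d-1$, hence is such a $\partial[V']$ with $V'$ a $(d+1)$-clique of $\Gamma$; if $\Gamma$ has no $d$-face there is no such $V'$ and $\ker\partial_{d-1}=0$, while if $\sigma$ is its unique $d$-face then necessarily $V'=\sigma$, so $\ker\partial_{d-1}=\KK\,\partial[\sigma]=\operatorname{im}\partial_d$; either way $\tilde H_{d-1}(\Gamma)=0$.

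Proving the small-cycles statement is the technical heart and, I expect, the main obstacle; I would argue by induction on $d$. For $d=2$ it is clear: a nonzero $1$-cycle is supported on a graph of minimum degree $\ge2$, which contains a graph cycle, so a support of size $\le3$ is a triangle. For $d\ge3$, let $Z=\supp(z)$, $U=\bigcup Z$, $t=|U|$; one checks $t\ge d+1$, and that $t=d+1$ forces $Z=\binom{U}{d}$ (the $(d-1)$-cycle space of $\partial\Delta^d$ is spanned by $\partial[U]$, whose $d+1$ coefficients are all nonzero, so a proper subfamily of its facets carries no $(d-1)$-cycle). So assume $t\ge d+2$; I claim $|Z|\ge 2d$. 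Since $\partial z=0$, for each $v\in U$ the link chain $w_v=\sum_{F\in Z,\,v\in F}\pm c_F\,[F\setminus v]$ is a nonzero $(d-2)$-cycle, so by the inductive hypothesis its support — of size $n_v:=|\{F\in Z:v\in F\}|$ — is either $d$, in which case $w_v$ is a multiple of $\partial[U_v]$ for a $d$-set $U_v$ and the faces of $Z$ through $v$ are exactly the $d$ facets of $S_v:=\{v\}\cup U_v$ containing $v$, or else $\ge 2(d-1)$. Now two cases. If for some $v$ there is a vertex $v'\in U$ lying in no face of $Z$ through $v$, then the faces through $v$ and the faces through $v'$ are disjoint, so $|Z|\ge n_v+n_{v'}\ge d+d=2d$ (using $n_v\ge d$ always). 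Otherwise the faces through every vertex cover all of $U$; then $n_{v_0}=d$ is impossible for some $v_0$ (it would span only the $d+1<t$ vertices of $S_{v_0}$), so $n_{v_0}\ge 2d-2$. Writing $z=z'+z''$ with $z'$ the part of $z$ supported on the faces of $Z$ not containing $v_0$, one has $\partial z'=-w_{v_0}\ne0$, so $\supp(z')\ne\varnothing$, and $|\supp(z')|=1$ is impossible (a single face $F$ would give $|\supp(w_{v_0})|=|\supp(\partial[F])|=d<2d-2$); hence $|\supp(z')|\ge2$ and $|Z|=n_{v_0}+|\supp(z')|\ge 2d$. This completes the induction, hence the Main Lemma, hence the proposition.
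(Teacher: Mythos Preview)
Your proof is correct and arrives at the same core statement the paper does—namely, that any nonzero $(d-1)$-cycle supported on fewer than $2d$ faces must be a multiple of $\partial[V']$ for some $(d+1)$-set $V'$—but you prove it by a genuinely different route. The paper argues directly: given a cycle $\sum c_iF_i$ with support $\C'$ and $X=\bigcup\C'$, it shows by an increasing induction on $t$ (combined with a counting of how many circuits must contain each $(t-1)$-subset) that every $t$-subset of $X$ lies in some $F_i$ for $t\le d$, forcing $\C'=\binom{X}{d}$ and $|X|=d+1$. Your argument instead inducts on $d$ via the link: for each vertex $v$ the chain $w_v$ is a nonzero $(d-2)$-cycle, so by the inductive hypothesis its support is either exactly $d$ (and of boundary form) or at least $2d-2$, and a short case split on whether the stars cover all of $U$ finishes it. Your approach is more structural and arguably cleaner; the paper's is self-contained in a single dimension but requires a somewhat delicate double count.

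One small inaccuracy (which the paper shares): in the upper-bound construction, the clique complex $\Delta(\C)$ is not literally the bipyramid sphere for $d\ge3$, since $\Delta(\C)$ has full $(d-2)$-skeleton while the suspension of $\partial\Delta^{d-1}$ omits faces containing both apices. This is harmless for your purposes—$\Delta(\C)$ has no $d$-faces and the same $(d-1)$-faces as the bipyramid, so $\tilde H_{d-1}(\Delta(\C);\KK)=\ker\partial_{d-1}$ agrees with that of the sphere and is still $\KK$—but the phrase ``is exactly this simplicial $(d-1)$-sphere'' overstates it.
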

\begin{proof}
First we show that $\kappa_d\leq2d$. Let
\[\C_d=\left(\binom{\{1,\ldots,d+1\}}{d}\cup\binom{\{2,\ldots,d+2\}}{d}\right)\setminus\{2,\ldots,d+1\}.\]
The simplicial complex $\gen{\C_d}$ is a triangulation of the $(d-1)$-sphere $\mathbb{S}^{d-1}$ so that
\[\tilde{H}_{d-1}(\Delta(\C_d);\KK)=\tilde{H}_{d-1}(\gen{\C_d};\KK)=H_{d-1}(\mathbb{S}^{d-1};\KK)\neq0.\]
Thus $\beta_{1,d+2}(I(\bar{\C}_d))\neq0$ by Hochster's formula \cite[Theorem 5.1]{mh}, and hence $I(\bar{\C}_d)$ does not have linear resolution showing that $\kappa_d\leq2d$. 

To complete the proof, let $\C$ be a $d$-uniform clutter such that $|\C|<2d$. Let $\Delta=\Delta(\C)$ be the clique complex of $\C$. It is obvious that $d-1\leq\dim\Delta\leq d$. 
Suppose $\partial_{d-1}(\sum_{i=1}^mc_iF_i)=0$ for some circuits $F_1,\ldots,F_m$ of $\C$ and put $\C'=\{F_1,\ldots,F_m\}$. Clearly, every $(d-1)$-subset of $F_i$ is contained in $F_j$ for some $j\neq i$. Let $X=\cup\C'$. We show that any $t$-subset of $X$ is contained in some circuit of $\C'$ provided that $t\leq d$. Clearly, the result holds for $t=1$. Assume the claim is true for some $t<d$ but not for $t$. Let $X'$ be a $t$-subset of $X$. By assumptions, every $(t-1)$-subset of $X'$ is contained in at least $d-t+2$ circuits $F_{k_1}^{X'},\ldots, F_{k_{d-t+2}}^{X'}$ of $\C'$ for $X'$ is contained in some circuit $F$ and $F$ has $d-t+2$ subsets of size $d-1$ containing $X'$. Since $F_{k_i}^{X'}$ and $F_{k_j}^{X''}$ are distinct for $X'\neq X''$ or $i\neq j$, it follows that 
\[t(d-t+2)=\#\set{F_{k_i,X'}\colon\ X'\in\binom{X}{t-1},\ 1\leq i\leq d-t+2}\leq|\C'|<2d.\]
This yields $d<t$, which is a contradiction. It follows that $\C'=\binom{X}{d}$, hence $X\in\Delta$ and $|X|=d+1$. This shows that $\sum_{i=1}^mc_iF_i$ is a scalar multiple of $\partial_d(X)$ and consequently $\tilde{H}_{d-1}(\Delta[W],\KK)=0$ for all $W\subseteq V(\C)$.

If $\dim\Delta=d-1$, then $\tilde{H}_i(\Delta[W];\KK)=0$ by the argument above for all $W\subseteq V(\C)$ and $i\geq d-1$. Hence $I(\bar{\C})=I_{\Delta(\C)}$ has a linear resolution (see e.g. \cite[Proposition 3.1]{yazdan}). So, assume that $\dim\Delta=d$. In this case, $\C$ has a unique clique $F$ of size $d+1$. Indeed, if $F$ and $F'$ are two distinct cliques in $\C$ of size $d+1$, then $|F\cap F'|\leq d$ so that $F\cup F'$ contains at least $2d+1$ circuits of $\C$ contradicting the fact that $|\C|<2d$. Since $\dim\Delta=d$, it follows that $\tilde{H}_d(\Delta;\KK)=\ker\partial_d=0$, where $\partial_d$ is the chain map from $\KK F$ into $\bigoplus_{e\in\C}\KK e$. Consequently, $\tilde{H}_i(\Delta[W];\KK)=0$ for all $W\subseteq V(\C)$ and $i\geq d-1$. Thus $I(\bar{\C})$ has a linear resolution in this case too.
%
\end{proof}

If $G$ is a graph and $\ind(I(G)^k)>1$ for some $k \geq 1$, then $\bar{G}$ is $C_4$-free (Theorem~\ref{main theorem of MJR}). Here we prove an analogue result for $3$-uniform clutters.
\begin{proposition}\label{index of some power}
Let $\C$ be a $3$-uniform clutter and $I=I(\C)$ be the corresponding edge ideal. The following conditions are equivalent:
\begin{itemize}
\item[\rm (a)]$\bar{\C}$ is $\mathscr{C}$-free;
\item[\rm (b)]$\ind(I^k)>1$, for some $k \geq 1$.
\end{itemize}
\end{proposition}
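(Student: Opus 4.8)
The plan is to reuse Theorem~\ref{index of gap-free} for one implication and to reduce the other to a short list of explicit computations in lcm-lattices. The implication $(a)\Rightarrow(b)$ is immediate: by Theorem~\ref{index of gap-free}, $(a)$ forces $\ind(I)>1$, which is $(b)$ with $k=1$. For $(b)\Rightarrow(a)$ I would argue the contrapositive, showing that if $\bar\C$ is not $\mathscr C$-free then $\ind(I^k)=1$ for every $k\ge1$.

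The first step is a localization reduction. Suppose $\bar\C$ has an induced subclutter isomorphic to one of $\B,\B_1,\B_2,\B'$; relabeling vertices, we may assume $\bar\C[W]=\D$ for some $\D\in\mathscr C$ and some $W\subseteq V(\C)$, and then $\C[W]$ is the clutter $\C_0:=\bar\D$, the complement of $\D$ taken inside $\binom W3$. I would record the elementary fact that, for any monomial ideal $J$ and any subset $W$ of the variables, the contraction $J_W:=J\cap\KK[x_i:i\in W]$ satisfies $\beta_{i,\mathbf a}(J)=\beta_{i,\mathbf a}(J_W)$ for every exponent vector $\mathbf a$ supported on $W$: a divisor of a monomial supported on $W$ is itself supported on $W$, so the minimal generators of $J$ supported on $W$ are exactly the minimal generators of $J_W$, hence the open interval $(1,\mathbf x^{\mathbf a})$ is literally the same poset whether computed in $L(J)$ or in $L(J_W)$, and Theorem~\ref{Gasharov-Peeva-Welker formula} then gives $\beta_{i,\mathbf a}(J)=\beta_{i,\mathbf a}(J_W)$. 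Combining this with the equally elementary identities $I(\C)_W=I(\C[W])$ and $(J^k)_W=(J_W)^k$ for monomial ideals, the task reduces to this: for each $\D\in\mathscr C$, with $\C_0=\bar\D$, and each $k\ge1$, exhibit a monomial $u_k$ with $\deg u_k>3k$ and $\beta_{1,u_k}\bigl(I(\C_0)^k\bigr)\ne0$. Pulling $u_k$ back then gives $\ind(I(\C)^k)=1$, as required.

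The second step carries out these four families of computations via Theorem~\ref{Gasharov-Peeva-Welker formula}. In each case I will produce $u_k$ together with a minimal generator $q$ of $I(\C_0)^k$ dividing $u_k$ such that (i) no minimal generator of $I(\C_0)^k$ other than $q$ divides $q$, (ii) $\lcm(q,g)=u_k$ for every minimal generator $g\ne q$ of $I(\C_0)^k$ dividing $u_k$, and (iii) at least one such $g$ exists. Properties (i) and (ii) make $q$ an isolated vertex of the poset $(1,u_k)$: nothing of $L(I(\C_0)^k)$ lies strictly between $1$ and $q$ by (i), and anything lying strictly between $q$ and $u_k$ would be divisible by $q$ and by some other dividing generator $g$, hence by $\lcm(q,g)=u_k$ by (ii), a contradiction. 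With (iii) supplying a further vertex, $\Delta_{I(\C_0)^k}(1,u_k)$ is disconnected, so $\beta_{1,u_k}(I(\C_0)^k)=\dim_\KK\tilde{H}_0\bigl(\Delta_{I(\C_0)^k}(1,u_k);\KK\bigr)\neq0$. Concretely: for $\D=\B'$ one has $I(\C_0)=(x_1x_2x_3,\ x_4x_5x_6)$, and I take $u_k=(x_1x_2x_3)^k(x_4x_5x_6)$ and $q=(x_1x_2x_3)^k$, the only other generator dividing $u_k$ being $(x_1x_2x_3)^{k-1}(x_4x_5x_6)$; for $\D=\B$ and $\D=\B_1$ every generator of $I(\C_0)$ other than $h:=x_2x_3x_4$ contains $x_1x_5$, and I take $u_k=x_1x_5(x_2x_3x_4)^k$ and $q=(x_2x_3x_4)^k$, the other generators dividing $u_k$ being the monomials $(x_2x_3x_4)^{k-1}g'$ with $g'$ a generator of $I(\C_0)$ different from $h$; and for $\D=\B_2$ one has $I(\C_0)=(x_1x_4x_5,\ x_2x_3x_4)$, and I take $u_k=(x_1x_4x_5)^k(x_2x_3)$ and $q=(x_1x_4x_5)^k$. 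In all cases $\deg u_k\in\{3k+2,\ 3k+3\}$, so $I(\C_0)^k$ is not linearly presented and $\ind(I(\C_0)^k)=1$, which establishes the contrapositive.

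The step I expect to be the main obstacle is the combinatorial bookkeeping in the second step: for each of the four families one must determine precisely which products of $k$ generators of $I(\C_0)$ divide $u_k$, and verify both that the ones listed are genuine minimal generators of $I(\C_0)^k$ and that there are no others. This goes through because the support of $u_k$ is chosen tightly enough that the generators of $I(\C_0)$ involving the variables $x_4,x_5,x_6$ (for $\B'$) or $x_1,x_5$ (for $\B,\B_1,\B_2$) can occur at most once in any product dividing $u_k$; that is exactly what collapses the list of dividing generators to $q$ together with the family of monomials $(x_2x_3x_4)^{k-1}g'$, and what pins down the lcm relations in (ii). A secondary point needing care is the base case $k=1$, where $(x_2x_3x_4)^{k-1}=1$ and the computation must reproduce the values $\beta_{1,5}=1$ and $\beta_{1,6}=1$ recorded in Remark~\ref{betti numbers of bi-pyramids}.
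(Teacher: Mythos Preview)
Your proof is correct and follows essentially the same approach as the paper: Theorem~\ref{index of gap-free} for $(a)\Rightarrow(b)$, and for the contrapositive of $(b)\Rightarrow(a)$ the Gasharov--Peeva--Welker formula applied to the same witness monomials $x_1x_5(x_2x_3x_4)^k$ and $(x_1x_2x_3)^k(x_4x_5x_6)$. Your explicit localization to $I(\C_0)^k$ and your isolated-vertex argument for $q$ are in fact more careful than the paper's assertion that the open interval $(1,\alpha)$ ``consists of the four atoms'', which is literally false for $\B$ and $\B_1$ when $k\ge2$ (pairwise lcms of the non-$q$ atoms lie strictly below $\alpha$), though the disconnectedness conclusion still holds since $q=(x_2x_3x_4)^k$ is isolated.
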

\begin{proof}
(a) $\Rightarrow$ (b). If $\bar{\C}$ is $\mathscr{C}$-free, then by Theorem \ref{index of gap-free}, $\mathrm{index}(I)>1$.

(b) $\Rightarrow$ (a). If $k=1$ the result holds in view of Theorem~\ref{index of gap-free}, so assume that $k>1$ and $\ind(I^k) >1$. Suppose on the contrary that $\bar{\C}$ has an induced subclutter isomorphic to an element of $\mathscr{C}$. First assume that $\bar{\C}$ contains the bi-pyramid 
\[\B=\{123, 124, 134, 235, 345, 245\}\]
as induced subclutter. It follows that $x_1 x_2 x_5 , x_1 x_3 x_5, x_1 x_4 x_5, x_2 x_3 x_4 \in I$. Note that the monomials $(x_2 x_3 x_4)^k, (x_1 x_2 x_5)(x_2 x_3 x_4)^{k-1}, (x_1 x_3 x_5)(x_2 x_3 x_4)^{k-1}$, and $(x_1 x_4 x_5)(x_2 x_3 x_4)^{k-1}$ are atoms in the lcm-lattice of $I^k$.  Clearly $\alpha :=x_1 {x_2}^k {x_3}^k {x_4} ^k x_5$ covers these four atoms and the open interval $(1, \alpha)$ consists of the four atoms. Hence $\Delta(1, \alpha)$ is disconnected. Consequently,
\[\beta_{1,3k+2} (I^k) \geq \beta_{1, \alpha} (I^k) = \dim_{\KK} \left( \tilde{H}_0( \Delta(1, \alpha);\KK )\right)\neq 0.\]
This implies that $\ind(I^k)=1$, a contradiction. If $\B_1=\B \cup \{125\}$ or $\B_2=\B \cup \{125, 135\}$ is an induced subclutter of $\bar{\C}$, then the same discussion as above leads us to a contradiction. Finally, suppose that $\bar{\C}$ contains $\binom{[6]}{3} \setminus \{123,456\}$ as induced subclutter. In this case, put $\alpha=(x_1 x_2 x_3)^k (x_4 x_5 x_6)$. Then the open interval $(1, \alpha)$ consists of two atoms $(x_1 x_2 x_3)^{k-1}(x_4 x_5 x_6)$ and $(x_1 x_2 x_3)^k$. Thus $\Delta(1, \alpha)$ is disconnected and
\[\beta_{1,3k+3} (I^k) \geq \beta_{1, \alpha} (I^k) = \dim_{\KK} \left( \tilde{H}_0( \Delta(1, \alpha);\KK) \right)\neq 0.\]
This implies that $\ind(I^k)=1$, which is again a contradiction.
\end{proof}

If $I=I(G)$ is the edge ideal of a graph $G$ and $\ind(I)>1$, then Theorem~\ref{main theorem of MJR} implies that $\ind(I^k)>1$ for all $k\geq1$. Examples show that this statement is not true if $I$ is generated by (square-free) monomials generated in degree greater that $2$ (see Theorem~\ref{index(I)>1 and index(I2)>1}). The next theorem shows that this result still holds for square-free monomial ideals in a polynomial ring with at most five variables. Theorem~\ref{index(I)>1 and index(I2)>1} shows that we can not extend this result for polynomial rings in more than five variables.
\begin{theorem}\label{index with n <= 5}
Let $I\subseteq \KK[x_1,\ldots,x_n]$ be a square-free monomial ideal with $\ind(I)>1$, where $n\leq5$. Then $\ind(I^k)>1$ for all $k\geq1$.
\end{theorem}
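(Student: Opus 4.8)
The plan is to peel off every case except $n=5$, $d=3$, and then translate that case into a connectivity question. After discarding variables not occurring in $\G(I)$, assume $I$ is generated in degree $d$ and uses all $n\le5$ variables. If $d\in\{1,n\}$ then $I$ is a power of a variable ideal or is principal, so $\ind(I^k)=\infty$. If $d=n-1$, the generators of $I$ are the monomials $x_1\cdots x_n/x_j$, and up to a monomial factor $I^k$ is generated by all monomials of degree $(n-1)k$ whose exponents are at most $k$; this ideal is polymatroidal, so $\ind(I^k)=\infty$. If $d=2$, then $\ind(I)=\ind(I^1)>1$, so Theorem~\ref{main theorem of MJR} already gives $\ind(I^k)>1$ for all $k$. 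This leaves $n=5$, $d=3$; write $I=I(\C)$ for a $3$-uniform clutter $\C$ on $[5]$ that uses all five vertices.

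\textbf{Translation to a graph.} Let $\mathcal H$ be the graph on $[5]$ with edge set $\{[5]\setminus F:F\in\C\}$; since $\C$ uses all five vertices, $\mathcal H$ is not a star. A minimal generator of $I^k$ is $x^{(k,\dots,k)-\mathbf e(M)}$, where $M$ ranges over the $k$-edge multisets of $\mathcal H$ and $\mathbf e(M)\in\ZZ_{\ge0}^5$ is its degree vector (so $|\mathbf e(M)|=2k$), distinct multisets with the same degree vector giving the same generator. Fix $\alpha=x^{\mathbf a}\in L(I^k)$ with $\deg\alpha\ge 3k+2$ and set $\mathbf r=(k,\dots,k)-\mathbf a\ge0$; then $|\mathbf r|=5k-\deg\alpha\le 2k-2$, the generators of $I^k$ dividing $\alpha$ are exactly those with degree vector $\mathbf e\ge\mathbf r$ (these are the vertices of $G_{I^k}^{U,V}$ whenever $\lcm(U,V)=\alpha$), and two of them are joined by an edge precisely when their degree vectors differ by $\mathbf 1_u-\mathbf 1_w$ for some $u\ne w$. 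By Theorem~\ref{index in terms of graph}, it therefore suffices to prove: \emph{if $\mathcal H$ is a connected graph on $[5]$, $k\ge1$, and $\mathbf r\in\ZZ_{\ge0}^5$ satisfies $|\mathbf r|\le 2k-2$, then the graph $\Gamma$ whose vertices are the degree vectors $\mathbf e(M)$ of $k$-edge multisets $M$ of $\mathcal H$ with $\mathbf e(M)\ge\mathbf r$, with an edge between $\mathbf e$ and $\mathbf e'$ whenever $\mathbf e-\mathbf e'=\pm(\mathbf 1_u-\mathbf 1_w)$ for some $u\ne w$, is connected.}

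\textbf{Reducing to connected $\mathcal H$.} This is where $\ind(I)>1$ is used. By Theorem~\ref{index of gap-free}, over five vertices $\overline{\C}$ is $\mathscr{C}$-free iff $\overline{\C}\not\cong\B,\B_1,\B_2$, and under $F\mapsto[5]\setminus F$ these three clutters correspond to $\mathcal H\cong K_2\sqcup K_3,\ K_2\sqcup P_3,\ 2K_2$; so $\ind(I)>1$ exactly says $\mathcal H$ is none of these. If $\mathcal H$ has an isolated vertex $v$, then $v$ belongs to every circuit of $\C$, whence $I=x_vI(G)$ for a graph $G$ on the other four vertices, $I^k=x_v^kI(G)^k$, $\ind(I^k)=\ind(I(G)^k)$, and $\ind(I(G))=\ind(I)>1$ together with Theorem~\ref{main theorem of MJR} gives $\ind(I^k)>1$ for all $k$. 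Otherwise $\mathcal H$ has no isolated vertex; if in addition it has a component consisting of a single edge then, $\mathcal H$ not being a star, it must be $K_2\sqcup P_3$ or $K_2\sqcup K_3$ — excluded by hypothesis. Hence every component of $\mathcal H$ has at least three vertices, so on five vertices $\mathcal H$ is connected, and the displayed claim applies.

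\textbf{The connectivity claim, and the main obstacle.} For $\mathbf r=0$ this is the familiar fact that the ``edge-slide'' graph of $k$-edge multisets of a connected graph is connected: replacing an edge $\{a,b\}\in M$ by an edge $\{b,c\}\in\mathcal H$ changes $\mathbf e(M)$ by $\mathbf 1_c-\mathbf 1_a$ (or by nothing), and one slides any edge of $M$ along a path of $\mathcal H$ to any prescribed edge of a target multiset. The step I expect to be hardest is to perform such transformations for arbitrary $\mathbf r$ while keeping every intermediate degree vector $\ge\mathbf r$; since the slack $2k-|\mathbf r|$ is at least $2$, there are always two ``spare'' half-edges available, and the idea is to route spare weight onto a coordinate before a slide that would otherwise push it below $r_i$ — but making this rerouting rigorous is delicate. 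Should it prove too troublesome, a fallback is a direct finite check: the connected complete multipartite graphs on five vertices yield matroids, whose powers are polymatroidal and hence have linear resolutions, while for each of the at most fifteen remaining connected graphs on five vertices the set of realizable degree vectors is explicit enough to decide $\Gamma$ for all $k$ and all admissible $\mathbf r$.
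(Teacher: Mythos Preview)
Your reductions to $n=5$, $d=3$ mirror the paper's, and the passage to the ``dual'' graph $\mathcal H$ with edge set $\{[5]\setminus F:F\in\C\}$ is a genuinely different and attractive reformulation: reading generators of $I^k$ as $x^{(k,\dots,k)-\mathbf e(M)}$ and identifying $G_{I^k}^{u,v}$ with the $\ell_1$-distance-$2$ graph $\Gamma$ on realizable degree vectors $\ge\mathbf r$ is correct, and the identification of $\B,\B_1,\B_2$ with $K_3\sqcup K_2$, $P_3\sqcup K_2$, and $2K_2$ (plus an isolated vertex) nicely explains why $\ind(I)>1$ forces $\mathcal H$ to be connected after the isolated-vertex reduction.

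That said, the proposal has a genuine gap at exactly the point you flag: the connectivity of $\Gamma$ is asserted, not proved. The edge-slide heuristic ``route spare weight onto a coordinate first'' is precisely the step that can fail, because for sparse $\mathcal H$ the realizable degree vectors form a thin set (e.g.\ for $\mathcal H=P_5$ they lie on the affine slice $d_1+d_3+d_5=d_2+d_4$ with $d_2\ge d_1$, $d_4\ge d_5$), so intermediate vectors produced by a naive rerouting need not be realizable, let alone $\ge\mathbf r$. The fallback is not a finite check either: even after fixing $\mathcal H$, there are infinitely many $(k,\mathbf r)$ and you give no uniformization. The paper avoids this altogether. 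It does not dualize; instead it takes the minimal $m$ with $\ind(I^m)=1$, writes the offending pair $u=u_1\cdots u_m$, $v=v_1\cdots v_m$, and repeatedly uses connectivity of $G_{I^{m-1}}$ to constrain the factors. The engine is an exponent inequality: whenever $u_s=x_ix_jx_k$ and $v_t=x_ix_jx_l$ share two variables, then $\alpha_k\le\beta_k$ and $\alpha_l\ge\beta_l$ in $u=\mathbf x^{\boldsymbol\alpha}$, $v=\mathbf x^{\boldsymbol\beta}$ (otherwise one can splice a $G_{I^{m-1}}$-path into $G_{I^m}^{u,v}$). This forces enough structure to finish by a three-case analysis on whether $x_4x_5$ divides some $u_i$ and $x_3x_5$ divides some $v_j$. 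So the paper trades your single clean graph-theoretic statement for a longer but self-contained case analysis; your route is viable in principle, but the hard part---the constrained connectivity of $\Gamma$---still has to be done.
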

\begin{proof}
Suppose $I\subseteq \KK[x_1,\ldots,x_n]$ is a square-free monomial ideal generated in degree $d$ and that $x_1\cdots x_n\mid\lcm(\G(I))$. If $d=1$, then there is nothing to prove. The case $d=2$ follows from Theorem~\ref{main theorem of MJR}. If $d=n-1$ or $d=n$, then $I$ is a polymatroidal ideal and hence all powers of $I$ have linear resolutions (see \cite[Theorems 5.2 and 5.3]{ac-jh}). So, the only case we need to consider is $n=5$ and $d=3$. Suppose on the contrary that $\ind(I^m)=1$ for some $m$, and that $m$ is minimum with this property. Then $\ind(I^i)>1$, for all $i=1,\ldots,m-1$. By Theorem~\ref{index in terms of graph}, there exist $u,v\in\G(I^m)$ such that $u$ and $v$ belong to different connected components of $G_{I^m}^{u,v}$. Let $u=u_1\cdots u_m$ and $v=v_1\cdots v_m$ with $u_i,v_i\in\G(I)$, for $i=1,\ldots,m$. Let $U=\{u_1,\ldots,u_m\}$ and $V=\{v_1,\ldots,v_m\}$. It turns out that $U\cap V=\varnothing$ for $G_{I^{m-1}}^{u/u_i,v/v_j}$ is connected when $u_i=v_j$, for some $1\leq i,j\leq m$. 

For a monomial $u$ let $\supp(u)$ be the set $\{i\colon\ x_i\mid u\}$. First assume that $\supp(u_i)\cap\supp(v_j)$ is a singleton for all $1\leq i,j\leq m$. Without loss of generality, suppose that $u_1=x_1x_2x_3$ and $v_1=x_1x_4x_5$. Clearly, $U\subseteq\{x_1x_2x_3,\ x_2x_3x_4,\ x_2x_3x_5\}$ and $V\subseteq\{x_1x_4x_5,\ x_2x_4x_5,\ x_3x_4x_5\}$. Since $\ind((u_1, X))=1$ for any $\varnothing\neq X\subseteq V$  (see Theorem \ref{index of gap-free}), we conclude that $\G(I)\setminus(\{u_1\}\cup V)$ (and similarly $\G(I)\setminus(\{v_1\}\cup U)$) is non-empty. Let $x_ax_bx_c\in \G(I)\setminus(\{u_1\}\cup V)$. If $x_ax_bx_c\notin U$, then we may assume w.l.o.g. that $x_ax_bx_c=x_1x_2x_4$. Let $w_1=(u/x_3)x_4$ and $w_2=(v/x_5)x_2$. Then $w_1,w_2\in\G(I^m)$ and divide $\lcm(u,v)$. Since $w_1=x_1x_2x_4(u/u_1)$ and $w_2=x_1x_2x_4(v/v_1)$ share the same element $x_1x_2x_4\in\G(I)$ and the graph $G_{I^{m-1}}^{w_1/x_1x_2x_4,w_2/x_1x_2x_4}$ is connected, there exists a path between $w_1$ and $w_2$ in $\G_{I^m}^{u,v}$. Since $u\sim w_1$ and $w_2\sim v$, it follows that $u$ and $V$ are connected via a path in $G_{I^m}^{u,v}$, which is a contradiction. Thus $\G(I)\subseteq U\cup V$. Clearly, $|\G(I)\cap U|,|\G(I)\cap V|\geq2$. W.l.o.g. we can assume that $x_2x_3x_4\in \G(I)\cap U$ and $x_2x_4x_5\in\G(I)\cap V$. Let $w_1=(u/x_1)x_4$, $w_2=(u/x_1x_3)x_4x_5$, and $w_3=(v/x_1)x_2$. Clearly, $w_1,w_2,w_3\in\G(I^m)$ and divide $\lcm(u,v)$. Notice that $w_2=x_2x_4x_5(u/u_1)$ and $w_3=x_2x_4x_5(v/v_1)$ share the same element $x_2x_4x_5$ of $\G(I)$. Since $u\sim w_1\sim w_2$, $w_3\sim v$, and $w_2/x_2x_4x_5$ and $w_3/x_2x_4x_5$ are connected via a path in $G_{I^{m-1}}^{w_2/x_2x_4x_5, w_3/x_2x_4x_5}$, it follows that $u$ and $v$ are connected via a path in $\G_{I^m}^{u,v}$. This contradiction shows that there exist $i,j$ such that $u_i$ and $v_j$ have two variables in common, say $i=j=1$, $u_1=x_1x_2x_3$, and  $v_1=x_1x_2x_4$. Let $u=x_1^{\alpha_1}\cdots x_5^{\alpha_5}$ and $v=x_1^{\beta_1}\cdots x_5^{\beta_5}$. We claim
\begin{itemize}
\item[($*$)]If $u_s=x_ix_jx_k$ and $v_t=x_ix_jx_l$, then $\alpha_k\leq\beta_k$ and $\alpha_l\geq\beta_l$.
\item[]If not, we may assume $\alpha_k>\beta_k$. Then $v\sim (v/v_t)u_s$, $(v/v_t)u_s$ divides $\lcm(u,v)$, and $u/u_s$ and $v/v_t$ are connected via a path in $G_{I^{m-1}}^{u/u_s,v/v_t}$. Hence $u$ and $v$ are connected via a path in $G_{I^m}^{u,v}$, which contradicts our assumption.
\end{itemize}

Since $u_1=x_1x_2x_3$ and $v_1=x_1x_2x_4$ we get $\alpha_4\geq\beta _4>0$ and $0<\alpha_3\leq\beta_3$. This shows that $x_4\mid u$ and $x_3\mid v$. By symmetry, we have three cases to consider:

Case 1. $x_4x_5\nmid u_i$ and $x_3x_5\nmid v_i$, for $i=1,\ldots,m$. Since $x_4\mid u$ and $x_3\mid v$, we can assume that $u_2=x_1x_3x_4$ and $v_2=x_2x_3x_4$. As $\alpha_3\leq\beta_3<m$ and $\beta_4\leq\alpha_4<m$, there exist $3\leq i,j\leq m$ such that $x_3\nmid u_i$ and $x_4\nmid v_j$. From the inequalities $u_i\neq x_1x_2x_4$ and $v_j\neq x_1x_2x_3$, we observe that $x_5\mid u_i,v_j$. By assumption, $x_4\nmid u_i$ and $x_3\nmid v_j$. Thus $u_i=v_j=x_1x_2x_5$, which is a contradiction.

Case 2. $x_4x_5\mid u_i$ and $x_3x_5\mid v_j$ for some $1\leq i,j\leq m$. W.l.o.g. assume that $i=j=2$. By symmetry of $1$ and $2$ in $u_1$ and $v_1$, either $u_2=x_1x_4x_5$ or $u_2=x_3x_4x_5$. First assume that $u_2=x_1x_4x_5$. Then $\alpha_2\geq\beta_2$ and $\alpha_5\leq\beta_5$ by $(*)$. We have three cases for $v_2$. If $v_2=x_1x_3x_5$, then we get $\alpha_i=\beta_i$ for all $1\leq i\leq 5$ by $(*)$ and hence $u=v$, a contradiction. If $v_2=x_2x_3x_5$, then $\alpha_5=\beta_5$ by $(*)$. Since $\alpha_1\leq\beta_1<m$, $u_i\mid x_2x_3x_4x_5$ for some $i>2$. Analogously, $v_j\mid x_1x_3x_4x_5$ for some $j>2$ as $\beta_2\leq\alpha_2<m$. As $U\cap V=\varnothing$, we deduce that $x_4\mid u_i$ and $x_3\mid v_j$, from which it follows that $\alpha_4=\beta_4$ and $\alpha_3=\beta_3$ by $(*)$, respectively. If $x_2\mid u_i$ (resp. $x_1\mid v_1$), then $(*)$ yields $\alpha_1=\beta_1$ (resp. $\alpha_2=\beta_2$) and hence $\alpha_2=\beta_2$ (resp. $\alpha_1=\beta_1$) as $\deg(u)=\deg(v)$. Thus $u=v$, which is a contradiction. Hence $x_2\nmid u_i$ and $x_1\nmid v_j$, which implies that $u_i=v_j=x_3x_4x_5$ contradicting the fact that $U\cap V=\varnothing$. The case $v_3=x_3x_4x_5$ is ruled out with the same discussion as in the case $v_2=x_2x_3x_5$. Now, assume $u_2=x_3x_4x_5$. By symmetry, either $v_2=x_1x_3x_5$ or $v_2=x_3x_4x_5$. The case $v_2=x_1x_3x_5$ leads us to a contradiction as in the case $u_2=x_1x_4x_5$. Hence $v_2=x_3x_4x_5$, which is a contradiction as $U\cap V=\varnothing$.

Case 3. $x_4x_5\mid u_i$ for some $2\leq i\leq m$ but $x_3x_5\nmid v_j$, for all $2\leq j\leq m$. W.l.o.g. assume that $i=2$. By symmetry of $x_1$ and $x_2$ in $u_1$ and $v_1$, either $u_2=x_1x_4x_5$ or $u_2=x_3x_4x_5$. Since $\beta_3\geq\alpha_3>0$, $x_3\mid v_j$ for some $2\leq j\leq m$, say $x_3\mid v_2$. As $x_5\nmid v_2$ and $v_2\neq x_1x_2x_3$, it follows that $x_4\mid v_2$. Thus $v_2=x_1x_3x_4$ or $v_2=x_2x_3x_4$. First assume that $u_2=x_1x_4x_5$. Then $\alpha_2\geq\beta_2$ and $\alpha_5\leq\beta_5$ by $(*)$. From $\beta_5\geq\alpha_5>0$, it follows that $x_5\mid v_j$ for some $3\leq j\leq m$, say $x_5\mid v_3$. As $x_3\nmid v_3$ and $v_3\neq x_1x_4x_5$, we must have $x_2\mid v_3$. Hence $v_3=x_1x_2x_5$ or $v_3=x_2x_4x_5$. If $v_2=x_1x_3x_4$, then by swapping $u_1$ with $u_2$, and $v_2$ with $v_3$, we are at the position of Case 2, which leads us to a contradiction. Hence $v_2=x_2x_3x_4$ and consequently $\alpha_1\leq\beta_1$ by $(*)$. If $v_3=x_1x_2x_5$, then by replacing $v_1$ and $v_2$ by $v_2$ and $v_3$, respectively, we get a contradiction as we are at the position of Case 2 once again. Thus $v_3=x_2x_4x_5$. From the inequalities $\alpha_1\leq\beta_1<m$, we conclude that $x_1\nmid u_i$ for some $3\leq i\leq m$, say $x_1\nmid u_3$. Since $U\cap V=\varnothing$, it follows that $x_3x_5\mid u_3$. Now by replacing $u_1,u_2,v_1,v_2$ by $v_1,v_3,u_1,u_3$, we are at the position of Case 2 leading us to a contradiction. This shows that $u_2\neq x_1x_4x_5$, that is we must have $u_2=x_3x_4x_5$. Then by swapping $u_1$ with $u_2$, and $v_1$ with $v_2$, we arrive at the case where $u_2=x_1x_4x_5$ after suitable relabeling of the variables, which ruled out before. The proof is complete.
\end{proof}
\begin{remark}
Theorem \ref{index with n <= 5} is not valid for $n>5$, or monomial ideals with $n\leq 5$ that are not square-free. In Theorem \ref{index(I)>1 and index(I2)>1} we present (all possible) square-free monomial ideals $I\subseteq\KK[x_1,\ldots,x_6]$ with $\ind(I)>1$ and $\ind(I^2)=1$. As an instance of a monomial ideal $I$ satisfying $\ind(I)>1$ and $\ind(I^2)=1$ in $n\leq5$ variables, we can refer to Conca's example $I=(x_1^2x_2, x_1^2x_3, x_1x_3^2, x_2x_3^2, x_1x_3x_4)\subseteq\KK[x_1,x_2,x_3,x_4]$ (see \cite[Example 2.5]{ac}).
\end{remark}
\section{Index of the square of linearly presented cubic square-free monomial ideals}
In the rest of the paper, we characterize all cubic square-free monomial ideals $I$ such that $\ind(I)>1$ and $\ind(I^2)=1$. To this end, we need some preparations we mention here.
\begin{lemma}
Let $I$ be a monomial ideal, $k\geq1$, and $u=u_1\cdots u_k$ and $v=v_1\cdots v_k$ be monomials in $\G(I^k)$, where $u_1,\ldots,u_k,v_1,\ldots,v_k\in \mathcal{G}(I)$. Let $A,B\subseteq[k]$ be non-empty sets such that $|A|+|B|=k$ and $\prod_{i\in A}u_i\prod_{j\in B}v_j$ divides $\lcm(u,v)$. If $\ind(I^{|A|})>1$ and $\ind(I^{|B|})>1$, then $u$ and $v$ are connected via a path in $G_{I^k}^{u,v}$.
\end{lemma}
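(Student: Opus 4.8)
The plan is to reduce the general statement to the characterization of the index via connectivity of the graphs $G_{I^t}^{a,b}$ provided by Theorem~\ref{index in terms of graph}, applied separately to the ``$A$-part'' and the ``$B$-part'' of the factorizations. Write $a=\prod_{i\in A}u_i$ and $b=\prod_{j\in B}v_j$, so that $a\in\G(I^{|A|})$ and $b\in\G(I^{|B|})$ (these are genuine minimal generators of the respective powers because $u,v$ are). The monomial $w:=ab$ has degree $d\cdot k$ and, by hypothesis, divides $\lcm(u,v)$; moreover $w$ is itself a product of $k$ generators of $I$, so $w\in\G(I^k)$ once we check it is a \emph{minimal} generator --- but in fact we do not even need minimality of $w$, only that $w$ lies in $\G(I^k)$ or is a multiple of an element of $\G(I^k)$ dividing $\lcm(u,v)$; I will argue this more carefully below.

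First I would establish that $u$ is connected to $w$ in $G_{I^k}^{u,v}$. The point is that $w=a\cdot b$ where $a=\prod_{i\in A}u_i$ divides $u$; thus $u$ and $w$ agree on the ``$A$-slot'' and differ only in how the remaining $|B|$ factors are chosen. Formally, consider the map sending a monomial $c\in\G(I^{|B|})$ with $c\mid\lcm(u,v)/a$ to $a\cdot c\in\G(I^k)$; since $\ind(I^{|B|})>1$, Theorem~\ref{index in terms of graph} gives a path in $G_{I^{|B|}}^{\prod_{i\notin A}u_i,\; b}$ between $\prod_{i\notin A}u_i$ and $b$, and multiplying every vertex of this path by $a$ yields a walk in $G_{I^k}^{u,v}$ from $u$ to $w$ (one must check that adjacency is preserved: if $c,c'$ are adjacent in $G_{I^{|B|}}$ then $\deg\lcm(c,c')=d|B|+1$, hence $\deg\lcm(ac,ac')\le d|A|+d|B|+1=dk+1$, and since $ac\neq ac'$ this degree is exactly $dk+1$, so $ac\sim ac'$; one must also check each $ac$ divides $\lcm(u,v)$, which holds because $a\mid u$ and $c\mid \lcm(u,v)/a$ --- this last containment needs the hypothesis $ab\mid\lcm(u,v)$ together with a divisibility bookkeeping argument). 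Symmetrically, using $\ind(I^{|A|})>1$ and the factor $b\mid v$, I get a path in $G_{I^k}^{u,v}$ from $w$ to $v$. Concatenating the two paths connects $u$ and $v$.

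The step I expect to be the main obstacle is the divisibility bookkeeping: ensuring that every intermediate vertex produced by the two ``relative'' paths actually divides $\lcm(u,v)$, so that it is a legitimate vertex of $G_{I^k}^{u,v}$ (recall $G_{I^k}^{u,v}$ is the \emph{induced} subgraph on divisors of $\lcm(u,v)$). For the $u$-to-$w$ path this requires that the path in $G_{I^{|B|}}$ between $\prod_{i\notin A}u_i$ and $b$ --- which Theorem~\ref{index in terms of graph} only guarantees inside $G_{I^{|B|}}^{\prod_{i\notin A}u_i,\,b}$, i.e.\ among divisors of $\lcm(\prod_{i\notin A}u_i,\,b)$ --- gives, after multiplication by $a$, divisors of $\lcm(u,v)$; this follows once one shows $a\cdot\lcm\!\big(\prod_{i\notin A}u_i,\,b\big)$ divides $\lcm(u,v)$, which in turn reduces to the hypothesis $ab\mid\lcm(u,v)$ and the trivial fact $a\prod_{i\notin A}u_i=u\mid\lcm(u,v)$. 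I would spell this out exponent by exponent. Everything else is the routine verification, already sketched, that multiplying a path by a fixed monomial preserves edges of the appropriate lcm-graph, plus the observation that $a$ and $b$ are minimal generators of $I^{|A|}$ and $I^{|B|}$ respectively (inherited from minimality of $u\in\G(I^k)$ and $v\in\G(I^k)$), so that Theorem~\ref{index in terms of graph} genuinely applies to them.
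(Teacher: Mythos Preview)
Your approach is correct and is essentially identical to the paper's: both proofs pass through the pivot $w=ab=\big(\prod_{i\in A}u_i\big)\big(\prod_{j\in B}v_j\big)$, first connecting $u$ to $w$ by multiplying a path in $G_{I^{|B|}}^{\prod_{i\notin A}u_i,\,b}$ by $a$, then connecting $w$ to $v$ by multiplying a path in $G_{I^{|A|}}^{a,\,\prod_{j\notin B}v_j}$ by $b$. The paper simply asserts the concatenated sequence is a path in $G_{I^k}^{u,v}$ without spelling out the divisibility check you worry about; your exponent-by-exponent verification that $a\cdot\lcm(\prod_{i\notin A}u_i,\,b)\mid\lcm(u,v)$ is the honest way to fill that in, and your hedging about minimality of $w$ is unnecessary since $I$ (and hence $I^k$) is implicitly equigenerated.
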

\begin{proof}
Without loss of generality, we may assume that $A=\{1,\ldots,r\}$ and $B=\{r+1,\ldots,k\}$. Let $u':=u_1\cdots u_r$, $u'':=u_{r+1}\cdots u_k$, $v':=v_1\cdots v_r$, and $v'':=v_{r+1}\cdots v_k$. Suppose $u'=\alpha_0,\alpha_1,\ldots,\alpha_t=v'$ is a path in $G_{I^{|A|}}^{u',v'}$ and $u''=\beta_0,\beta_1,\ldots,\beta_s=v''$ is a path in $G_{I^{|B|}}^{u'',v''}$. Then 
\[u=u'\beta_0,u'\beta_1,\ldots,u'\beta_s=\alpha_0v'',\alpha_1v'',\ldots,\alpha_tv''=v\]
is a path in $G_{I^k}^{u,v}$, as required.
\end{proof}
\begin{corollary}\label{prod(u_i)prod(v_j)|lcm(u,v)}
Let $I$ be a monomial ideal and $k\geq1$ be such that $\ind(I^i)>1$ for $i=1,\ldots,k-1$. Let $u=u_1\cdots u_k$ and $v=v_1\cdots v_k$  be monomials in $\G(I^k)$, where $u_1,\ldots,u_k,v_1,\ldots,v_k\in \mathcal{G}(I)$. If $u,v$ belong to distinct connected components of $G_{I^k}^{u,v}$, then $\prod_{i\in A}u_i\prod_{j\in B}v_j$ does not divide $\lcm(u,v)$ for any two non-empty subsets $A$ and $B$ of $[k]$ with $|A|+|B|=k$.
\end{corollary}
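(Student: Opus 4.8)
The plan is to derive this as an immediate consequence of the preceding Lemma. Suppose, for contradiction, that there exist non-empty subsets $A, B \subseteq [k]$ with $|A| + |B| = k$ and $\prod_{i \in A} u_i \prod_{j \in B} v_j$ dividing $\lcm(u, v)$. Since $A$ and $B$ are non-empty and their sizes sum to $k$, we have $1 \leq |A|, |B| \leq k-1$. By hypothesis $\ind(I^i) > 1$ for $i = 1, \ldots, k-1$, so in particular $\ind(I^{|A|}) > 1$ and $\ind(I^{|B|}) > 1$. The Lemma then applies verbatim and yields a path in $G_{I^k}^{u,v}$ connecting $u$ and $v$, contradicting the assumption that $u$ and $v$ lie in distinct connected components of $G_{I^k}^{u,v}$.

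The only point requiring a word of care is the edge case: we must check that $|A|$ and $|B|$ genuinely fall in the range $\{1, \ldots, k-1\}$ so that the hypothesis $\ind(I^i) > 1$ is available for those exponents. Non-emptiness of $A$ gives $|A| \geq 1$, and since $|B| \geq 1$ as well, $|A| = k - |B| \leq k - 1$; symmetrically for $|B|$. (When $k = 1$ there are no such pairs $A, B$, so the statement is vacuous.) Thus there is no genuine obstacle here — the corollary is a direct repackaging of the Lemma, and the main content lies in the Lemma's path-splicing construction rather than in this deduction.

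\begin{proof}
Suppose to the contrary that there exist non-empty subsets $A,B\subseteq[k]$ with $|A|+|B|=k$ such that $\prod_{i\in A}u_i\prod_{j\in B}v_j$ divides $\lcm(u,v)$. Since $A\neq\varnothing$ and $B\neq\varnothing$, we have $1\leq|A|\leq k-1$ and $1\leq|B|\leq k-1$. By assumption, $\ind(I^{|A|})>1$ and $\ind(I^{|B|})>1$. Hence, by the previous lemma, $u$ and $v$ are connected via a path in $G_{I^k}^{u,v}$, contradicting the hypothesis that $u$ and $v$ belong to distinct connected components of $G_{I^k}^{u,v}$.
\end{proof}
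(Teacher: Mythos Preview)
Your proof is correct and matches the paper's intended approach: the corollary is stated without proof precisely because it is the immediate contrapositive of the preceding Lemma, and your argument spells out exactly that deduction. Your handling of the range $1\leq|A|,|B|\leq k-1$ and the vacuous case $k=1$ is accurate and complete.
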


Let $\D_i^n$ be the $3$-clutter associated to the $i$-th vertex in Figure \ref{d=3, n=6}, \ref{d=3, n=7}, or \ref{d=3, n=8} when $n=6$, $n=7$, or $n=8$, respectively, where
\begin{align*}
\D_1^6&=\{123,\ 246,\ 145,\ 356,\ 134,\ 136,\ 146,\ 346\},\\
\D_6^7&=\{123,\ 124,\ 127,\ 145,\ 147,\ 247,\ 267,\ 347\},\\
\D_{48}^7&=\{123,\ 124,\ 136,\ 145,\ 146,\ 147,\ 167,\ 246,\ 267,\ 346,\ 347\},\\
\D_1^8&=\{123,\ 124,\ 125,\ 145,\ 147,\ 246,\ 248,\ 258,\ 456\},
\end{align*}
and an arrow with label $abc$ from $i$-th vertex to $j$-th vertex means $\D_j^n=\D_i^n\cup\{abc\}$. Notice that all parallel edges in Figures \ref{d=3, n=6}, \ref{d=3, n=8}, \ref{d=3, n=7} have the same labels and that all clutters $\D_i^n$ are uniquely determined by diagram chasing. Let $\mathscr{D}$ denote the set of all $\D_i^n$ for $n=6,7,8$. The ideal associated to the clutter $\D_1^6$, introduced by Sturmfels \cite{bs} in 2000, is the first example of an ideal with linear resolution (quotients) whose square does not have linear resolution.

\begin{figure}[H]
\begin{minipage}{0.4\textwidth}
\centering
\begin{tikzpicture}
\node [very thick, draw, inner sep=2pt] (1) at (0,0) {{\tiny $1$}};
\node [draw, inner sep=2pt] (2) at (1,0) {{\tiny $2$}};
\node [draw, inner sep=2pt] (3) at (2,0) {{\tiny $3$}};
\node [draw, inner sep=2pt] (4) at (3,0) {{\tiny $4$}};
\node [draw, inner sep=2pt] (5) at (4,0) {{\tiny $5$}};
\node [draw, inner sep=2pt] (6) at (1,1) {{\tiny $6$}};
\draw[->-] (1) to node [below] {{\tiny $126$}} (2);
\draw[->-] (2) to node [below] {{\tiny $156$}} (3);
\draw[-<-] (3) to node [below] {{\tiny $146$}} (4);
\draw[-<-] (4) to node [below] {{\tiny $136$}} (5);
\draw[->-] (2) to node [above, rotate=90] {{\tiny $234$}} (6);
\end{tikzpicture}
\caption{$n=6$}\label{d=3, n=6}
\end{minipage}
\begin{minipage}{0.4\textwidth}
\centering
\begin{tikzpicture}
\node [very thick, draw, inner sep=2pt] (1) at (0,0) {{\tiny $1$}};
\node [draw, inner sep=2pt] (2) at (1,0) {{\tiny $2$}};
\node [draw, inner sep=2pt] (3) at (2,0) {{\tiny $3$}};
\node [draw, inner sep=2pt] (4) at (0,1) {{\tiny $4$}};
\node [draw, inner sep=2pt] (5) at (1,1) {{\tiny $5$}};
\node [draw, inner sep=2pt] (6) at (2,1) {{\tiny $6$}};
\draw[->-] (1) to node [below] {{\tiny $126$}} (2);
\draw[->-] (2) to node [below] {{\tiny $167$}} (3);
\draw[->-] (4)--(5);
\draw[->-] (5)--(6);
\draw[->-] (1) to node [above, rotate=90] {{\tiny $148$}} (4);
\draw[->-] (2)--(5);
\draw[->-] (3)--(6);
\end{tikzpicture}
\caption{$n=8$}\label{d=3, n=8}
\end{minipage}
\end{figure}

\begin{figure}[H]
\begin{tikzpicture}[scale=1.5]
\node [draw, inner sep=2pt] (1) at (0,0) {{\tiny $1$}};
\node [draw, inner sep=2pt] (2) at (1,0) {{\tiny $2$}};
\node [draw, inner sep=2pt] (3) at (2,0) {{\tiny $3$}};
\node [draw, inner sep=2pt] (4) at (4.75,0) {{\tiny $4$}};
\node [draw, inner sep=2pt] (5) at (6,0) {{\tiny $5$}};
\node [very thick, draw, inner sep=2pt] (6) at (7,0) {{\tiny $6$}};
\node [draw, inner sep=2pt] (7) at (8,0) {{\tiny $7$}};
\node [draw, inner sep=2pt] (8) at (9,0) {{\tiny $8$}};

\node [draw, inner sep=2pt] (9) at (0,1) {{\tiny $9$}};
\node [draw, inner sep=2pt] (10) at (1,1) {{\tiny $10$}};
\node [draw, inner sep=2pt] (11) at (2,1) {{\tiny $11$}};
\node [draw, inner sep=2pt] (12) at (4.75,1) {{\tiny $12$}};
\node [draw, inner sep=2pt] (13) at (6,1) {{\tiny $13$}};
\node [draw, inner sep=2pt] (14) at (7,1) {{\tiny $14$}};
\node [draw, inner sep=2pt] (15) at (8,1) {{\tiny $15$}};
\node [draw, inner sep=2pt] (16) at (9,1) {{\tiny $16$}};

\node [draw, inner sep=2pt] (17) at (0,2) {{\tiny $17$}};
\node [draw, inner sep=2pt] (18) at (1,2) {{\tiny $18$}};
\node [draw, inner sep=2pt] (19) at (2,2) {{\tiny $19$}};
\node [draw, inner sep=2pt] (20) at (4.75,2) {{\tiny $20$}};
\node [draw, inner sep=2pt] (21) at (6,2) {{\tiny $21$}};
\node [draw, inner sep=2pt] (22) at (7,2) {{\tiny $22$}};
\node [draw, inner sep=2pt] (23) at (8,2) {{\tiny $23$}};
\node [draw, inner sep=2pt] (24) at (9,2) {{\tiny $24$}};

\node [draw, inner sep=2pt] (25) at (2.75,-0.75) {{\tiny $25$}};
\node [draw, inner sep=2pt] (26) at (2.75,0.25) {{\tiny $26$}};
\node [draw, inner sep=2pt] (27) at (2.75,1.25) {{\tiny $27$}};

\node [draw, inner sep=2pt] (28) at (4,0.75) {{\tiny $28$}};
\node [draw, inner sep=2pt] (29) at (4,1.75) {{\tiny $29$}};
\node [draw, inner sep=2pt] (30) at (4,2.75) {{\tiny $30$}};

\node [draw, inner sep=2pt] (31) at (3.25,1.5) {{\tiny $31$}};
\node [draw, inner sep=2pt] (32) at (3.25,2.5) {{\tiny $32$}};
\node [draw, inner sep=2pt] (33) at (3.25,3.5) {{\tiny $33$}};

\node [draw, inner sep=2pt] (34) at (5.25,0.75) {{\tiny $34$}};
\node [draw, inner sep=2pt] (35) at (5.25,1.75) {{\tiny $35$}};
\node [draw, inner sep=2pt] (36) at (5.25,2.75) {{\tiny $36$}};

\node [draw, inner sep=2pt] (37) at (6,3) {{\tiny $37$}};
\node [draw, inner sep=2pt] (38) at (6,4) {{\tiny $38$}};
\node [draw, inner sep=2pt] (39) at (5.25,3.75) {{\tiny $39$}};
\node [draw, inner sep=2pt] (40) at (5.25,4.75) {{\tiny $40$}};

\node [draw, inner sep=2pt] (41) at (6,-1) {{\tiny $41$}};
\node [draw, inner sep=2pt] (42) at (6,5) {{\tiny $42$}};

\node [draw, inner sep=2pt] (43) at (1.75,-0.75) {{\tiny $43$}};
\node [draw, inner sep=2pt] (44) at (2.50,-1.50) {{\tiny $44$}};
\node [draw, inner sep=2pt] (45) at (3.25,-2.25) {{\tiny $45$}};
\node [draw, inner sep=2pt] (46) at (3.50,-1.50) {{\tiny $46$}};

\node [draw, inner sep=2pt] (47) at (1,3) {{\tiny $47$}};

\node [very thick, draw, inner sep=2pt] (48) at (8,4) {{\tiny $48$}};

\draw[->-] (1) to node [below] {{\tiny $124$}} (2);
\draw[->-] (2) to node [below] {{\tiny $247$}} (3);
\draw[-<-] (3) to node [below] {{\tiny $234$}} (4);
\draw[-<-] (4) to node [below] {{\tiny $245$}} (5);
\draw[-<-] (5) to node [below] {{\tiny $246$}} (6);
\draw[->-] (6) to node [below] {{\tiny $234$}} (7);
\draw[->-] (7) to node [below] {{\tiny $137$}} (8);

\draw[->-] (9)--(10);
\draw[->-] (10)--(11);
\draw[-<-] (11)--(12);
\draw[-<-] (12)--(13);
\draw[-<-] (13)--(14);
\draw[->-] (14)--(15);
\draw[->-] (15)--(16);

\draw[->-] (17)--(18);
\draw[->-] (18)--(19);
\draw[-<-] (19)--(20);
\draw[-<-] (20)--(21);
\draw[-<-] (21)--(22);
\draw[->-] (22)--(23);
\draw[->-] (23)--(24);

\draw[->-] (1) to node [above, rotate=90] {{\tiny $257$}} (9);
\draw[->-] (9) to node [above, rotate=90] {{\tiny $146$}} (17);
\draw[->-] (2)--(10);
\draw[->-] (10)--(18);
\draw[->-] (3)--(11);
\draw[->-] (11)--(19);
\draw[->-] (4)--(12);
\draw[->-] (12)--(20);
\draw[->-] (5)--(13);
\draw[->-] (13)--(21);
\draw[->-] (6)--(14);
\draw[->-] (14)--(22);
\draw[->-] (7)--(15);
\draw[->-] (15)--(23);
\draw[->-] (8)--(16);
\draw[->-] (16)--(24);

\draw[->-] (25)--(26);
\draw[->-] (26)--(27);

\draw[-<-] (3)--(25);
\draw[-<-] (11)--(26);
\draw[-<-] (19)--(27);

\draw[->-] (28)--(29);
\draw[->-] (29)--(30);

\draw[->-] (31)--(32);
\draw[->-] (32)--(33);

\draw[->-] (34)--(35);
\draw[->-] (35)--(36);

\draw[->-] (4)--(28);
\draw[->-] (12)--(29);
\draw[->-] (20) to node [above, rotate=-50] {{\tiny $167$}} (30);

\draw[->-] (28)--(31);
\draw[->-] (29)--(32);
\draw[->-] (30) to node [above, rotate=-50] {{\tiny $157$}} (33);

\draw[->-] (5)--(34);
\draw[->-] (13)--(35);
\draw[->-] (21)--(36);

\draw[-<-] (28)--(34);
\draw[-<-] (29)--(35);
\draw[-<-] (30)--(36);

\draw[-<-] (21)--(37);
\draw[->-] (37)--(38);
\draw[-<-] (36) to node [above, rotate=90] {{\tiny $257$}} (39);
\draw[->-] (39) to node [above, rotate=90] {{\tiny $157$}} (40);

\draw[->-] (37)--(39);
\draw[->-] (38)--(40);

\draw[-<-] (2) to node [below, rotate=-50] {{\tiny $245$}} (43);

\draw[->-] (43) to node [below, rotate=-50] {{\tiny $146$}} (44);
\draw[->-] (25)--(46);
\draw[->-] (43)--(25);
\draw[->-] (44)--(46);

\draw[->-] (44) to node [below, rotate=-50] {{\tiny $136$}} (45);

\draw[-<-] (18)--(47);

\draw[->-] (5) to node [below, rotate=90] {{\tiny $157$}} (41);

\draw[->-] (38) to node [below, rotate=90] {{\tiny $257$}} (42);
\end{tikzpicture}
\caption{$n=7$}\label{d=3, n=7}
\end{figure}

Let $I$ be a monomial ideal in a polynomial ring generated in degree $d$. It follows from \cite[Theorem 2.1]{yazdan} that $\ind(I)>1$ if and only if $\beta_{1,d+2}(I)=\cdots=\beta_{1,2d}(I)=0$. In particular, if $I$ is generated in degree $3$, then $\ind(I^2)>1$ if and only if $\beta_{1,8}(I^2)=\cdots=\beta_{1,12}(I^2)=0$. The following theorem shows surprisingly that if $I=I(\C)$ is the edge ideal of a $3$-uniform clutter $\C$ with $\ind(I)>1$, then $\ind(I^2)>1$ if and only if $\beta_{1,8}(I^2)=0$.
\begin{theorem}\label{index(I)>1 and index(I2)>1}
Let $\C$ be a $3$-uniform clutter and let $I=I(\C)$ be such that $\ind(I)>1$. The following conditions are equivalent:
\begin{itemize}
\item[{\rm (a)}]$\C$ is $\mathscr{D}$-free;
\item[{\rm (b)}]$\ind(I^2)>1$;
\item[{\rm (c)}]$\beta_{1,8}(I^2)=0$.
\end{itemize}
\end{theorem}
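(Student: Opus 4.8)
The plan is to prove the cycle of implications $(b)\Rightarrow(c)\Rightarrow(a)\Rightarrow(b)$. The implication $(b)\Rightarrow(c)$ is immediate from the definition of the index, since $\ind(I^2)>1$ forces $\beta_{1,j}(I^2)=0$ for all $j>6$, in particular $\beta_{1,8}(I^2)=0$. For $(c)\Rightarrow(a)$, I would argue contrapositively: if $\C$ contains an induced subclutter $\D$ isomorphic to some $\D_i^n\in\mathscr{D}$, then I must exhibit a nonzero $\beta_{1,8}(I^2)$. The key observation is that for each of the three ``base'' clutters $\D_1^6$, $\D_6^7$, $\D_1^8$ (the minimal elements of the three diagrams), one checks by a direct computation that $\beta_{1,8}(I(\D_i^n)^2)\neq0$ — these are exactly the Sturmfels-type examples — and then, because adding generators along the arrows in Figures~\ref{d=3, n=6}, \ref{d=3, n=8}, \ref{d=3, n=7} preserves the relevant multigraded Betti number in degree $8$ (this is the role of the diagrams: each arrow labeled $abc$ records that passing from $\D_i^n$ to $\D_j^n=\D_i^n\cup\{abc\}$ leaves $\beta_{1,\alpha}(I^2)$ unchanged for the critical multidegree $\alpha$ of total degree $8$), every $\D_i^n\in\mathscr{D}$ has $\beta_{1,8}(I(\D_i^n)^2)\neq0$. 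Finally, since $\D$ is an \emph{induced} subclutter of $\C$, the lcm-lattice of $I(\D)^2$ embeds as a sublattice (an interval-closed subposet) of that of $I^2$ in the appropriate way, so by the Gasharov--Peeva--Welker formula (Theorem~\ref{Gasharov-Peeva-Welker formula}) the nonvanishing of $\beta_{1,\alpha}(I(\D)^2)$ lifts to $\beta_{1,\alpha}(I^2)\neq0$, hence $\beta_{1,8}(I^2)\neq0$, contradicting (c).

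The main work is $(a)\Rightarrow(b)$. Assume $\C$ is $\mathscr{D}$-free and, for contradiction, $\ind(I^2)=1$. Since $\ind(I)>1$ by hypothesis, Theorem~\ref{index in terms of graph} gives $u=u_1u_2$ and $v=v_1v_2$ in $\G(I^2)$, with $u_1,u_2,v_1,v_2\in\G(I)$, lying in distinct connected components of $G_{I^2}^{u,v}$. By Corollary~\ref{prod(u_i)prod(v_j)|lcm(u,v)} (applied with $k=2$, so $|A|=|B|=1$), none of $u_1v_1$, $u_1v_2$, $u_2v_1$, $u_2v_2$ divides $\lcm(u,v)$; in particular $\{u_1,u_2\}\cap\{v_1,v_2\}=\varnothing$. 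One then analyzes the combinatorial configuration of the supports of $u_1,u_2,v_1,v_2$ together with all generators of $I$ that divide $\lcm(u,v)$. The disconnectedness of $G_{I^2}^{u,v}$, translated via Theorem~\ref{index in terms of graph}, forces many potential generators to be \emph{absent} from $\C$ (equivalently, to lie in $\bar\C$): whenever a generator $w\in\G(I)$ would create a length-$2$ path $u\sim w(u/u_i)\sim\cdots$ bridging the two components — and more generally whenever Corollary~\ref{prod(u_i)prod(v_j)|lcm(u,v)} is violated — we get a contradiction, so $w\notin\C$. The goal is to show that the resulting forced configuration always contains an induced copy of one of the clutters in $\mathscr{D}$ on the vertex set $\supp(\lcm(u,v))$, which has size $6$, $7$, or $8$ (it cannot exceed $8$ since $\deg\lcm(u,v)\le 8$ once we know the $u_i,v_j$ are cubics with $\{u_1,u_2\}\cap\{v_1,v_2\}=\varnothing$, and small cases are handled separately). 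This is precisely a finite case analysis, organized by $n:=|\supp(\lcm(u,v))|\in\{6,7,8\}$ and by the intersection pattern of the supports, and it is what Figures~\ref{d=3, n=6}, \ref{d=3, n=8}, \ref{d=3, n=7} encode: each vertex of a diagram is a possible ``forced'' clutter, and the arrows organize the bookkeeping so that every branch terminates at a clutter containing some $\D_i^n$.

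The hard part will be this last case analysis for $n=7$, which is why Figure~\ref{d=3, n=7} is so much larger than the other two: the number of ways the supports of $u_1,u_2,v_1,v_2$ can overlap on a $7$-element vertex set, combined with the choices of which bridging generators are present, produces a large branching tree, and one must verify that every leaf contains an induced $\D_i^7$. I expect to carry this out with the aid of the symmetry group permuting the shared/unshared coordinates (as in the proof of Theorem~\ref{index of gap-free}, where a group action cut $36$ orbits down to a manageable list), reducing to orbit representatives, and then finishing the residual cases — plausibly with computer assistance, as was done for Theorem~\ref{index of gap-free}. Once $(a)\Rightarrow(b)$ is established the theorem follows, and as noted after the statement it yields the striking consequence that for cubic square-free $I$ with $\ind(I)>1$, the single vanishing $\beta_{1,8}(I^2)=0$ already forces $\beta_{1,9}(I^2)=\cdots=\beta_{1,12}(I^2)=0$.
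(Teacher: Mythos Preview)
Your overall strategy matches the paper's: reduce via Theorem~\ref{index in terms of graph} and Corollary~\ref{prod(u_i)prod(v_j)|lcm(u,v)} to a finite case analysis on $n=|\supp(uv)|\in\{6,7,8\}$, carried out with symmetry reductions and computer assistance. Two points, however, are not quite right and should be fixed.

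First, your reading of Figures~\ref{d=3, n=6}--\ref{d=3, n=7} is off. The arrows do \emph{not} encode a ``Betti-number preservation'' mechanism; they are merely a compact way of listing all $60$ clutters in $\mathscr{D}$ by recording the inclusion $\D_j^n=\D_i^n\cup\{abc\}$. In the paper the implication $(c)\Rightarrow(a)$ is obtained not by propagating along arrows but as a by-product of the forward classification: once one has shown (in the course of proving $(a)\Leftrightarrow(b)$) that each $\D_i^n$ is exactly a clutter admitting $u,v\in\G(I(\D_i^n)^2)$ with $\deg\lcm(u,v)=8$ lying in distinct components of $G^{u,v}$, the contrapositive of Theorem~\ref{index in terms of graph}(ii) gives a disconnected $\Delta_{I^2}(1,\lcm(u',v'))$ with $\deg\lcm(u',v')=8$, whence $\beta_{1,8}(I^2)\neq0$. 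Your ``lcm-lattice of $I(\D)^2$ embeds'' remark is the right idea for lifting to $\C$, and indeed $\Delta_{I^2}(1,\alpha)=\Delta_{I(\D)^2}(1,\alpha)$ when $\supp(\alpha)\subseteq V(\D)$ and $\D$ is induced; but drop the arrow-preservation story.

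Second, your bound on $n$ is stated loosely. Disjointness of $\{u_1,u_2\}$ and $\{v_1,v_2\}$ alone does not give $n\le8$; the paper uses Corollary~\ref{prod(u_i)prod(v_j)|lcm(u,v)} to force $\gcd(u_i,v_j)\neq1$ for all $i,j$ (so $n\le9$), then rules out $n=9$ by an explicit contradiction, and obtains $n\ge6$ from Theorem~\ref{index with n <= 5} (not just ``small cases handled separately''). After that, Algorithm~\ref{algorithm1} pins $(u_1,u_2,v_1,v_2)$ down to a single canonical quadruple for each $n\in\{6,7,8\}$ (all with $\deg\lcm(u,v)=8$, which is what makes the $\beta_{1,8}$ statement work), and Algorithm~\ref{algorithm2} enumerates the compatible clutters via the independent-set description $\C'=\C_1^n\cup X\cup Y$; this is more structured than the branching tree you sketch, though the spirit is the same.
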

\begin{proof}
Let $\C'$ be a $3$-uniform clutter on $[n]$ and $J=I(\C')$ be such that $\ind(J)>1$. Let $u,v\in\G(J^2)$ be monomials such that $\supp(uv)=[n]$. We show that $\C'$ is isomorphic to an element of $\mathscr{D}$ if and only if $u,v$ belong to distinct connected components of $G:=G_{J^2}^{u,v}$. Suppose $u$ and $v$ belong to distinct connected components of $G$. Let $u=u_1u_2$ and $v=v_1v_2$ with $u_1,u_2,v_1,v_2\in\mathcal{G}(J)$. Corollary~\ref{prod(u_i)prod(v_j)|lcm(u,v)} implies that $u_iv_j\nmid \lcm(u,v)$ for $i,j\in\{1,2\}$. As a result, $\gcd(u_i,v_j)\neq1$ for $i,j\in\{1,2\}$ from which it follows that $n\leq9$. Notice that $\{u_1,u_2\}\cap\{v_1,v_2\}=\varnothing$ otherwise $u_i=v_j$ for some $i,j$, say $u_1=v_1$ and consequently $u_1u_2=u_2v_1\mid\lcm(u,v)$, a contradiction. If $n=9$, then $u_1=x_1x_2x_3$, $u_2=x_1x_4x_5$, $v_1=x_1x_6x_7$, and $v_2=x_1x_8x_9$ after a relabeling of the variables. Then $u_1v_1\mid\lcm(u,v)$, which is a contradiction. On the other hand, Theorem~\ref{index with n <= 5} in conjunction with Theorem~\ref{index in terms of graph} reveal that $n\geq6$. Thus $6\leq n\leq 8$. 

By invoking Corollary \ref{prod(u_i)prod(v_j)|lcm(u,v)} and a computer search presented by Algorithm \ref{algorithm1}, we arrive at the following cases after a suitable relabeling of variables:
\begin{align*}
u_1=x_1x_2x_3,\ u_2=x_2x_4x_6,\ v_1=x_1x_4x_5,\ v_2=x_3x_5x_6,&&\text{if }n=6,\\
u_1=x_1x_2x_3,\ u_2=x_3x_4x_7,\ v_1=x_1x_4x_5,\ v_2=x_2x_6x_7,&&\text{if }n=7,\\
u_1=x_1x_2x_3,\ u_2=x_4x_5x_6,\ v_1=x_1x_4x_7,\ v_2=x_2x_5x_8,&&\text{if }n=8.
\end{align*}
For a given fixed $n\in\{6,7,8\}$, let $\C_1^n:=\{F\colon\ \bx_F\in\{u_1,u_2,v_1,v_2\}\}$, and define $\C_2^n$, $\C_3^n$, and $\C_4^n$ as follows:
\begin{itemize}
\item $\C_2^n$ is the set of all triples $F$ such that for $w_1:=\bx_F$ there exists $w_2\in\{u_1,u_2\}$ satisfying $w_1w_2\in V(G)$, $u\sim w_1w_2$, and $v_iw_j\mid\lcm(v,w_1w_2)$ as well as all triples $F$ such that for $w_1:=\bx_F$ there exists $w_2\in\{v_1,v_2\}$ satisfying $w_1w_2\in V(G)$, $v\sim w_1w_2$, and $u_iw_j\mid\lcm(u,w_1w_2)$.
\item $\C_3^n=A^n\cup B^n\cup C^n$, where $A^n$, $B^n$, and $C^n$ are as follows: $A^n$ is the set of all $2$-sets $\{F_1,F_2\}$ such that $\bx_{F_1}\bx_{F_2}\in V(G)$ and $u\sim\bx_{F_1}\bx_{F_2}\sim v$; $B^n$ is the set of all $2$-sets $\{F_1,F_2\}$ such that $\bx_{F_1}w_1,\bx_{F_2}w_2\in V(G)$ and $u\sim\bx_{F_1}w_1\sim\bx_{F_2}w_2\sim v$ for some $w_1,w_2\in\{u_1,u_2,v_1,v_2\}$; and $C^n$ is the set of all $2$-sets $\{F_1,F_2\}$ such that either $u=\bx_{F_1}\bx_{F_2}$ and $\bx_{F_i}v_j\mid\lcm(u,v)$, or $v=\bx_{F_1}\bx_{F_2}$ and $\bx_{F_i}u_j\mid\lcm(u,v)$, for some $i,j$.
\item $\C_4^n:=\binom{[n]}{3}\setminus(\C_1^n\cup\C_2^n\cup\bigcup\C_3^n)$.
\end{itemize}
Observe that $\C_2^n\cap\C=\varnothing$ and $\{F_1,F_2\}\nsubseteq\C$ for all $\{F_1,F_2\}\in\C_3^n$. Let $G_n$ be the graph obtained from $\C_3^n$ as a set of edges. Then $\C\cap V(G_n)$ is an independent set in $G_n$. It follows that, the possible candidates for $\C'$ are those of the form $\C_1^n\cup X\cup Y$, where $X$ is an independent set of $G_n$ and $Y$ is a subset of $\C_4^n$. In view of Algorithm \ref{algorithm2}, analysing all these cases yields $6$, $48$, and $6$ possible $3$-clutters $\C'$, namely $\D_i^n$'s, for which $\ind(J)>1$ and $u,v$ belong to different connected components of $G$ for $n=6$, $7$, and $8$, respectively. Hence $\C'$ is isomorphic to one of the elements of $\mathscr{D}$. Since the vertices $u,v$ belong to distinct connected components of $G_{I(\D_i^n)^2}^{u,v}$ by the argument above, the converse also holds.

Now let $I=I(\C)$ be the edge ideal of a $3$-uniform clutter $\C$ such that $\ind(I)>1$. Also, let $I_{u,v}$ be the edge ideal of $\C[\supp(uv)]$ for all $u,v\in\G(I^2)$. Theorem \ref{index in terms of graph} states that $\ind(I^2)>1$ if and only if $\ind(I_{u,v}^2)>1$ for all monomials $u,v\in\G(I^2)$ or equivalently $u,v$ belong to the same connected component of $G_{I_{u,v}^2}^{u,v}$ for all monomials $u,v\in\G(I^2)$. Our discussions above show that (a) and (b) are equivalent. Clearly, (b) implies (c). It remains to prove that (c) implies (a). Note that 
\begin{align*}
\beta_{1,8}(I^2)=\sum_{\substack{w\in L(I^2)\\\deg w=8}}\beta_{1,w}(I^2)&=\sum_{\substack{w\in L(I^2)\\\deg w=8}}\dim_\KK(\tilde{H}_0(\Delta_{I^2}(1,w);\KK))\\
&\geq\dim_\KK(\tilde{H}_0(\Delta_{I^2}(1,\lcm(u,v));\KK))\\
&=\dim_\KK(\tilde{H}_0(\Delta_{I_{u,v}^2}(1,\lcm(u,v));\KK))
\end{align*}
for all $u,v\in\G(I^2)$. If $\C$ contains an induced subclutter isomorphic to some $\D_i^n$ of $\mathscr{D}$, then there exist $u,v\in\G(I^2)$ such that $\D_i^n\cong\C[\supp(uv)]$ and $u,v$ belong to distinct connected components of $G_{I_{u,v}^2}^{u,v}=G_{I^2}^{u,v}$. Hence, by Theorem \ref{index in terms of graph}, $\Delta_{I^2_{u',v'}}(1,\lcm(u',v'))=\Delta_{I^2}(1,\lcm(u',v'))$ is disconnected for some $u',v'\in\G(I^2)$, which implies that $\beta_{1,8}(I^2)\neq0$. The proof is complete.
\end{proof}

Let $G_F^n$ be the stabilizer of $\{\{1,2,3\}, F\}$ in $S_n$ on its action on $2^{\binom{[n]}{3}}$ for $n\in\{6,7,8\}$, and $F=\{1,2,4\}$ or $F=\{1,4,5\}$. It is easy to see that 
$G_{\{1,2,4\}}^n=\gen{(1\ 2), S_{\{4,\ldots,n\}}}$ and $G_{\{1,4,5\}}^n=\gen{(2\ 3),(4\ 5),S_{\{6,\ldots,n\}}}$. The following algorithms are used in the proof of Theorem \ref{index(I)>1 and index(I2)>1}. The source codes of algorithms in GAP are available in \cite{mfdg}.
\begin{algorithm}[H]
\begin{algorithmic}[1]\baselineskip=10pt\relax
\REQUIRE Positive integer $n\in\{6,7,8\}$
\ENSURE All quadruples of cubic square-free monomials $(u_1,u_2,v_1,v_2)$ such that $\deg\lcm(u_1u_2,v_1v_2)>8$ and $u_iv_j\nmid\lcm(u_1u_2,v_1v_2)$ for all $i,j\in\{1,2\}$ modulo relabeling of indices
\STATE $\S \longleftarrow \binom{[n]}{3}$
\STATE $U_1 \longleftarrow \{1,2,3\}$
\FOR{$U_1\in\{\{1,2,4\},\ \{1,4,5\}\}$}
\STATE $\T_{V_1} \longleftarrow \varnothing$
\FOR{$U_2\in\S$}
\FOR{$V_2\in\S$}
\STATE $u \longleftarrow \bx_{U_1}\bx_{U_2}$ and $v \longleftarrow \bx_{V_1}\bx_{V_2}$
\IF{$\bx_{U_i}\bx_{V_j}\nmid\lcm(u,v)$ for all $i,j\in\{1,2\}$}
\STATE $\T_{V_1} \longleftarrow \T_{V_1}\cup\{(\{U_1,U_2\},\{V_1,V_2\})\}$
\ENDIF
\ENDFOR
\ENDFOR
\STATE $\O_{V_1} \longleftarrow$ the set of representatives of orbits of $G_{V_1}^n$ on $\T_{V_1}$
\STATE $\O_{V_1} \longleftarrow \O_{V_1}\setminus\{(\{U_1,U_2\},\{V_1,V_2\})\colon |(U_1\cup U_2)\cap(V_1\cup V_2)|=5\}$
\ENDFOR
\RETURN $\{(\bx_{U_1},\bx_{U_2},\bx_{V_1},\bx_{V_2})\colon (\{U_1,U_2\},\{V_1,V_2\})\in\O_{\{1,2,4\}}\cup\O_{\{1,4,5\}}\}$
\end{algorithmic}
\caption{}
\label{algorithm1}
\end{algorithm}
\begin{algorithm}[H]
\begin{algorithmic}[1]\baselineskip=10pt\relax 
\REQUIRE Any $(u_1,u_2,v_1,v_2)$ returned by Algorithm \ref{algorithm1} for a positive integer $n\in\{6,7,8\}$
\ENSURE All $3$-uniform clutters $\D$ on $[n]$ such that $J=I(\D)$ contains $u_1,u_2,v_1,v_2$ and satisfies $\ind(J)>1$ and $\ind(J^2)=1$
\STATE $u \longleftarrow u_1u_2$ and $v \longleftarrow v_1v_2$
\STATE Compute $\C_2^n$, $\C_3^n$, $\C_4^n$
\STATE $G_n \longleftarrow$ simple graph on $\binom{[n]}{3}$ with edge set $\C_3^n$
\STATE $i \longleftarrow 0$
\FOR{$X$ an independent set of $G_n$}
\FOR{$Y\subseteq \C_4^n$}
\STATE $\C' \longleftarrow \C_1^n\cup X\cup Y$
\STATE $J \longleftarrow I(\C')$
\IF{$\bar{\C'}$ is $\mathscr{C}$-free}
\IF{$u,v$ are in distinct connected components of $G_{J^2}^{u,v}$}
\STATE $i \longleftarrow i + 1$
\STATE $\D_i^n \longleftarrow \C'$
\ENDIF
\ENDIF
\ENDFOR
\ENDFOR
\RETURN $\D_1^n,\ldots,\D_i^n$
\end{algorithmic}
\caption{}
\label{algorithm2}
\end{algorithm}

Let $G$ be a graph and $I=I(G)$ be the edge ideal of $G$. If $\bar{G}$ is $C_4$-free then $\mathrm{index}(I^k)>1$ for all $k$ (Theorem~\ref{main theorem of MJR}). Francisco, H\`{a}, and Van Tuyl raised the question whether in this case $\mathrm{index}(I^k)= \infty$ for all $k \geq 2$? (see \cite[Question 1.7]{en-ip}). Nevo and Peeva show by an example that this statement is not true in general (see \cite[Couterexample 1.10]{en-ip}). Nevo \cite{en} shows that $\mathrm{index}(I^2)= \infty$ if $G$ is both gap-free and claw-free. Also, Banerjee \cite[Theorem 1.2]{ab} shows that $\mathrm{index}(I^k)= \infty$ for all $k$ if $G$ is both gap-free and cricket-free. In 	this regard, we pose the following question in the case of $3$-uniform clutters instead of graphs.
\begin{question}
Let $\C$ be a $3$-uniform clutter and $I=I(\C)$ be the edge ideal of $\C$. Assume that $\bar{\C}$ is $\mathscr{C}$-free and $\C$ is $\mathscr{D}$-free.
\begin{itemize}
\item[\rm (i)] Under which conditions $I^2$ has a linear resolution?
\item[\rm (ii)] Under which conditions $I^k$ has a linear resolution for all $k$?
\end{itemize} 
\end{question}

For non-negative integers $d\geq1$ and $k\geq0$, let $\Omega_{d,k}$ denote the set of all $d$-uniform clutters $\C$ satisfying the following conditions:
\begin{itemize}
\item[(i)]$\ind(I(\C)),\ldots,\ind(I(\C)^{k-1})>1$,
\item[(ii)]$\ind(I(\C)^k)=1$,
\item[(iii)]no proper induced sub-clutter of $\C$ satisfies (i) and (ii).
\end{itemize}
Also, let $\Omega_{d,k}(n)$ be the number of isomorphism classes of $d$-uniform clutters of $\Omega_{d,k}$ with $n$ vertices, for all $n\geq 1$. It turns out that, 
\begin{align*}
\{\Omega_{2,1}(n)\}&=0,0,0,1,0,0,\ldots,\\
\{\Omega_{3,1}(n)\}&=0,0,0,0,3,1,0,0,\ldots,\\
\{\Omega_{3,2}(n)\}&=0,0,0,0,0,6,48,6,0,0,\ldots
\end{align*}

\begin{conjecture}
With the notation as above, the sequence $\{\Omega_{d,k}(n)\}_n$ is unimodal.
\end{conjecture}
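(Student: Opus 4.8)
The plan is to first confine the problem to a bounded range of $n$, and then prove unimodality on that range by interpolating between consecutive values with structure-preserving operations on clutters; one might even aim for the stronger conclusion of log-concavity, which is consistent with the computed rows $0,0,0,0,3,1,0,\dots$ and $0,0,0,0,0,6,48,6,0,\dots$.

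\emph{Reduction to bounded $n$.} Fix $d$ and $k$ and let $\C\in\Omega_{d,k}$. Since $\ind(I(\C)^k)=1$, Theorem~\ref{index in terms of graph} produces $u=u_1\cdots u_k$ and $v=v_1\cdots v_k$ in $\G(I(\C)^k)$ lying in distinct connected components of $G_{I(\C)^k}^{u,v}$. Put $W=\supp(uv)$. Because the vertex set of $G_{J^j}^{u',v'}$ consists of generators dividing $\lcm(u',v')$ — hence automatically supported on $\supp(u'v')$ — and because adjacency in $G_{J^j}$ is a condition on monomials alone, one gets $G_{I(\C[W])^j}^{u',v'}=G_{I(\C)^j}^{u',v'}$ for every $j$ and every $u',v'$ supported on $W$. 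By Theorem~\ref{index in terms of graph} this forces $\ind(I(\C[W])^j)>1$ for $j<k$ and $\ind(I(\C[W])^k)=1$, so $\C[W]$ satisfies conditions (i)--(ii) of the definition of $\Omega_{d,k}$; the minimality condition (iii) then forces $W=V(\C)$. Since $\deg\lcm(u,v)\le\deg u+\deg v=2kd$, this already yields $\Omega_{d,k}(n)=0$ for $n>2kd$. Invoking the divisibility obstruction of Corollary~\ref{prod(u_i)prod(v_j)|lcm(u,v)} (or its analogue for $I^k$), which forces overlaps among the $u_i$ and $v_j$, together with a counting argument in the spirit of Proposition~\ref{mu(I)>C(n,d)-2d}, should sharpen this to an explicit bound $r_{d,k}$ (for instance $r_{3,2}=8$). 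An easy separate estimate gives the least $n$ with $\Omega_{d,k}(n)>0$, say $\ell_{d,k}$, and one must also show $\Omega_{d,k}(n)>0$ for \emph{every} $\ell_{d,k}\le n\le r_{d,k}$ (no internal gaps), most naturally by explicitly building a minimal witness on $n$ vertices, obtained from the most overlapped one by successively pulling its generators apart.

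\emph{Interpolating maps.} On the interval $[\ell_{d,k},r_{d,k}]$ I would construct, for $n$ below the peak $p_{d,k}$, an injection $\Omega_{d,k}(n)\hookrightarrow\Omega_{d,k}(n+1)$ by \emph{splitting} a vertex $a$ of $\C$ into two new vertices $a',a''$ and redistributing the circuits through $a$, and, for $n$ above $p_{d,k}$, an injection $\Omega_{d,k}(n)\hookrightarrow\Omega_{d,k}(n-1)$ by \emph{identifying} two vertices not jointly contained in any circuit (so that $d$-uniformity is preserved); together these force unimodality. The vertex to be split (resp.\ the pair to be identified) should be read off canonically from the witness $(u,v)$ — e.g.\ a vertex lying in only one of $\supp(u),\supp(v)$, resp.\ two vertices each private to some $u_i$ or $v_j$ — so that the operation is reversible on its image (hence injective) and preserves, via Theorem~\ref{index in terms of graph}, both the chain of index conditions $\ind(I(\C)),\ldots,\ind(I(\C)^{k-1})>1$ and $\ind(I(\C)^k)=1$, and the irredundancy condition (iii). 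For log-concavity one would instead seek a single injection $\Omega_{d,k}(n-1)\times\Omega_{d,k}(n+1)\hookrightarrow\Omega_{d,k}(n)\times\Omega_{d,k}(n)$ obtained by simultaneously merging a larger witness and splitting a smaller one.

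\emph{The main obstacle.} The delicate point throughout is the irredundancy condition (iii): splitting a vertex can inadvertently create a proper induced subclutter already in $\Omega_{d,k}$, while identifying two vertices can render a previously indispensable induced subclutter redundant, and in either case membership in $\Omega_{d,k}$ is lost. Controlling this requires a precise description of \emph{which} induced subclutters of a member of $\Omega_{d,k}$ fail $\ind(I^k)>1$ — that is, a classification of minimal witnesses $(u,v)$, far more intricate than the families $\mathscr{C}$ and $\mathscr{D}$ of Theorems~\ref{index of gap-free} and~\ref{index(I)>1 and index(I2)>1}. Without it, even the ``no internal gaps'' step and the identification of the peak location $p_{d,k}$ — which is needed so the two kinds of injection can be chosen consistently and no ``dip then rise'' occurs — remain out of reach. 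A weaker fallback, which settles the conjecture for any \emph{fixed} $(d,k)$ once the bound $r_{d,k}$ is in hand, is a finite computer verification over all $d$-uniform clutters on at most $r_{d,k}$ vertices, as already carried out for $(d,k)\in\{(2,1),(3,1),(3,2)\}$; this, however, does not yield the statement uniformly in $d$ and $k$.
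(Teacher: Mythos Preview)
The statement is posed in the paper as a \emph{conjecture}: the authors give only the three computed rows $\{\Omega_{2,1}(n)\}$, $\{\Omega_{3,1}(n)\}$, $\{\Omega_{3,2}(n)\}$ and no argument whatsoever toward a proof. There is therefore no ``paper's own proof'' to compare your attempt against; the question is simply whether your proposal constitutes a proof, and it does not---as you yourself make explicit.

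The one part of your write-up that is actually complete is the reduction to bounded $n$: your argument that any $\C\in\Omega_{d,k}$ must satisfy $|V(\C)|\le 2kd$, via the witness pair $(u,v)$ from Theorem~\ref{index in terms of graph} together with the observation that $G_{I(\C[W])^j}^{u',v'}=G_{I(\C)^j}^{u',v'}$ for $W=\supp(uv)$ and the minimality condition (iii), is correct and is a worthwhile remark (it is implicit in the paper's small-case computations but never isolated). Everything beyond that is programmatic. The sharpened bound $r_{d,k}$, the absence of internal zeros on $[\ell_{d,k},r_{d,k}]$, the location of the peak $p_{d,k}$, and above all the ``interpolating'' injections by vertex splitting/identification are not constructed; you correctly identify that preserving the irredundancy condition (iii) under either operation is the essential obstacle and would require a classification of minimal witnesses far beyond the families $\mathscr{C}$ and $\mathscr{D}$ obtained in Theorems~\ref{index of gap-free} and~\ref{index(I)>1 and index(I2)>1}. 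Your fallback---a finite verification for each fixed $(d,k)$ once $r_{d,k}$ is known---is sound in principle but, as you say, yields the conjecture only one pair at a time. In short, you have correctly located both the natural first reduction and the genuine difficulty, but the conjecture remains open.
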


\end{document}